\newcommand{\todo}[1]{#1}
\newtheorem{theorem}{Theorem}
\newtheorem{lemma}[theorem]{Lemma}
\newtheorem{proposition}[theorem]{Proposition}
\newtheorem{corollary}[theorem]{Corollary}
\theoremstyle{definition}
\newtheorem{definition}[theorem]{Definition} 
\theoremstyle{remark}
\newtheorem{remark}[theorem]{Remark}
\tikzstyle{none}=[inner sep=1pt]
\tikzstyle{None}=[inner sep=1pt, fill=white]
\tikzstyle{dashedcircle}=[circle, draw=gray, dashed, inner sep=6pt]
\tikzstyle{box}=[draw=black, fill=white, inner sep=.5ex, rounded corners=.1ex]
\tikzstyle{roundedbox}=[draw=black, fill=white, inner sep=.5ex, rounded corners=1ex]
\tikzstyle{cross}=[preaction={draw=white, -, line width=3pt}]
\tikzstyle{arrow}=[postaction=decorate]
\newcommand{\markat}{0.5}
\newcommand{\markwithsym}{>}
\newcommand{\markwith}{{\arrow[black]{\markwithsym}}}
\tikzset{decoration={markings, mark=at position \markat with \markwith}}
\newcommand{\ground}[2]{
  \node[inner sep=0mm] (#1) at (#2) {};
  \draw[thick]  ($(#2)+(0.3,-0.01)$) -- ($(#2)+(-0.3,-0.01)$);
  \draw[thick]  ($(#2)+(0.23,0.069)$) -- ($(#2)+(-0.22,0.069)$);
  \draw[thick]  ($(#2)+(0.16,0.139)$) -- ($(#2)+(-0.16,0.139)$);
  \draw[thick]  ($(#2)+(0.09,0.209)$) -- ($(#2)+(-0.09,0.209)$);
  \draw[thick]  ($(#2)+(0.02,0.279)$) -- ($(#2)+(-0.02,0.279)$);
}
\newcommand{\after}{\ensuremath{\circ}}
\newcommand{\tensor}{\ensuremath{\otimes}}
\newcommand{\id}[1][]{\ensuremath{1_{#1}}}
\newcommand{\cat}[1]{\ensuremath{\mathbf{#1}}}
\newcommand{\Cat}[1]{\ensuremath{\mathbf{#1}}}
\newcommand{\blank}{\ensuremath{\underline{\phantom{n}}}}
\newcommand{\CP}{\ensuremath{\mathrm{CP}^\infty}}
\newcommand{\CPM}{\ensuremath{\mathrm{CPM}}}
\begin{document}

\title{Pictures of complete positivity in arbitrary dimension}
\author{Bob Coecke and Chris Heunen}
\maketitle
\begin{abstract}
  Two fundamental contributions to categorical quantum mechanics are presented.
  First, we generalize the CPM--construction, that turns any dagger
  compact category into one with completely positive maps, to
  arbitrary dimension. Second, we axiomatize when a given category is
  the result of this construction. 
\end{abstract}

\section{Introduction}
\label{sec:intro}

Since the start of categorical quantum
mechanics~\cite{abramskycoecke:categoricalsemantics}, dagger
compactness has played a key role in most constructions, protocol
derivations and theorems. To name two:
\begin{itemize}
\item Selinger's \CPM--construction, which associates
  to any dagger compact category of pure states and operations a corresponding dagger
  compact category of mixed states and operations~\cite{selinger:completelypositive};
\item Environment structures, an axiomatic substitute for the
  \CPM--construction which proved to be particularly useful in the
  derivation of quantum protocols~\cite{coecke:selinger, coeckeperdrix:channels}.
\end{itemize}

It is well known that assuming compactness imposes finite dimension
when exporting these results to the Hilbert space
model~\cite{heunen:compactlyaccessible}. This paper introduces
variations of each the above two results that rely on dagger structure alone, and
in the presence of compactness reduce to the above ones. Hence,
these variations accommodate interpretation not just in the dagger 
compact category of finite dimensional Hilbert spaces and linear maps,
but also in the dagger category of Hilbert spaces of arbitrary
dimension and continuous linear maps. We show:
\begin{itemize}
\item that the generalized \CPM--construction indeed corresponds to the
  usual definitions of infinite-dimensional quantum information theory;
\item that the direct correspondence between the \CPM--construction and
  environment structure (up to the so-called doubling axiom) still carries trough.
\end{itemize}

The next two sections each discuss one of our two variations in 
turn.

\paragraph{Earlier work}
The variation of the \CPM--construction relying solely on dagger structure was already publicized by one of the authors as a research report~\cite{coecke:mix}. 
Here we relate that construction to the usual setting of infinite-dimensional quantum information theory, thereby justifying it in terms of the usual model.  
An earlier version of this construction appeared in conference proceedings~\cite{coeckeheunen:qpl}. Whereas composition is not always well-defined there, it did have the advantage that the output category of the construction was automatically small if the input was. The construction here is closer to~\cite{coecke:mix}, and has the advantage that it is rigorously well-defined, but the disadvantage that the output category might be large. See the discussion after Proposition~\ref{prop:welldefined} and Remark~\ref{remark:composition}.
Additionally, we generalise the construction further than~\cite{coeckeheunen:qpl}, to braided monoidal categories that are not necessarily symmetric. 

\paragraph{Related work}  
While there are previous results dealing with the transition to a noncompact setting in some way or another, \textit{e.g.}~\cite{abramskyblutepanangaden:nuclearideals, heunen:compactlyaccessible, heunen:embedding}, what is particularly appealing about
the results in this paper is that they still allow the diagrammatic representations of braided monoidal
categories~\cite{joyalstreet:tensorcalculus, selinger:graphicallanguages}.  

\paragraph{Future work} 
Classical information can be modelled in categorical quantum mechanics
using so-called classical structures~\cite{coeckepavlovic:classicalobjects,CoeckePaqPav:structuralism,abramskyheunen:hstar}. It is not clear whether
these survive \CPM-like constructions; see also~\cite{heunenboixo:cpfrob}. The environment structures of Section~\ref{sec:environment} could be a useful tool in this investigation. 

\section{Complete positivity}
\label{sec:cp}

Compact categories and their graphical calculus originated in \cite{kelly, kellylaplaza:compactcategories}. For a gentle introduction to dagger (compact) categories \cite{abramskycoecke:categoricalsemantics} and their graphical calculus \cite{selinger:completelypositive}, we refer
to~\cite{CatsII}.  We now recall the \emph{\CPM--construction}~\cite{selinger:completelypositive}, that,
given a dagger compact category $\cat{C}$, produces a new dagger   
compact category $\CPM(\cat{C})$ as follows. 
When wires of both types $A$ and
$A^*$ arise in one diagram, we will decorate them with arrows in
opposite directions. When possible we will suppress coherence
isomorphisms in formulae. Finally, recall that
$(\blank)_*$ reverses the order of tensor products, so $f_*$  
has type $A^* \to B^* \tensor C^*$ when $f \colon A \to C \tensor
B$~\cite{selinger:completelypositive}.  

\begin{itemize}
\item The objects of $\CPM(\cat{C})$ are the same as those of $\cat{C}$.
\item The morphisms $A \to B$ of $\CPM(\cat{C})$ are those morphisms of
  $\cat{C}$ that can be written in the form $(\id \tensor \eta^\dag \tensor \id)(f_*
  \tensor f) \colon A^* \tensor A \to B^* \tensor B$ for some morphism
  $f \colon A \to X \tensor B$ and object $X$ in $\cat{C}$.
  \[
    \CPM(\cat{C})(A,B) = \left\{\left.
      \vcenter{\hbox{\begin{tikzpicture}[scale=0.75]
	\begin{pgfonlayer}{nodelayer}
		\node [style=none] (0) at (-1, 1) {};
		\node [style=none] (1) at (1, 1) {};
		\node [style=none] (2) at (-1, 0) {};
		\node [style=box] (3) at (-0.75, 0) {$\;\;f_*\;\;$};
		\node [style=none] (4) at (-0.5, 0) {};
		\node [style=none] (5) at (0.5, 0) {};
		\node [style=box] (6) at (0.75, 0) {$\;\;\;f\;\;$};
		\node [style=none] (7) at (1, 0) {};
		\node [style=none] (8) at (-0.75, -1) {};
		\node [style=none] (9) at (0.75, -1) {};
	\end{pgfonlayer}
	\begin{pgfonlayer}{edgelayer}
		\draw [arrow] (3) to (8);
		\draw [arrow, bend right=90, looseness=2.25] (5) to (4);
		\draw [arrow] (0) to (2);
		\draw [arrow, markat=0.66] (9) to (6);
		\draw [arrow, markat=0.66] (7) to (1);
	\end{pgfonlayer}
      \end{tikzpicture}}}
      \;\;\right|\;\;
      \vcenter{\hbox{\begin{tikzpicture}[scale=0.75]
	\begin{pgfonlayer}{nodelayer}
		\node [style=none] (0) at (-1, 1) {};
		\node [style=none] (1) at (-0.5, 1) {};
		\node [style=none] (2) at (-1, 0) {};
		\node [style=box] (3) at (-0.75, 0) {$\;\;f\;\;$};
		\node [style=none] (4) at (-0.5, 0) {};
		\node [style=none] (5) at (-0.75, -1) {};
	\end{pgfonlayer}
	\begin{pgfonlayer}{edgelayer}
		\draw [arrow, markat=0.66] (5) to (3);
		\draw [arrow, markat=0.66] (4) to (1);
		\draw [arrow, markat=0.66] (2) to (0);
	\end{pgfonlayer}
      \end{tikzpicture}}}
      \in \cat{C}(A,X \tensor B) \right\}
  \]
  We call $X$ the \emph{ancillary system} of $(\id \tensor \eta^\dag
  \tensor \id)(f_* \tensor f)$, and $f$ its \emph{Kraus morphism}; these representatives are not unique.
\item Identities are inherited from $\cat{C}$, and composition is
  defined as follows.
 \[
    \left(
      \vcenter{\hbox{\begin{tikzpicture}[scale=0.75]
	\begin{pgfonlayer}{nodelayer}
		\node [style=none] (0) at (-1, 1) {};
		\node [style=none] (1) at (1, 1) {};
		\node [style=none] (2) at (-1, 0) {};
		\node [style=box] (3) at (-0.75, 0) {$\;\;g\vphantom{f}_*\;\;$};
		\node [style=none] (4) at (-0.5, 0) {};
		\node [style=none] (5) at (0.5, 0) {};
		\node [style=box] (6) at (0.75, 0) {$\;\;\;g\vphantom{f}\;\;$};
		\node [style=none] (7) at (1, 0) {};
		\node [style=none] (8) at (-0.75, -1) {};
		\node [style=none] (9) at (0.75, -1) {};
	\end{pgfonlayer}
	\begin{pgfonlayer}{edgelayer}
		\draw [arrow] (3) to (8);
		\draw [arrow, bend right=90, looseness=2.25] (5) to (4);
		\draw [arrow] (0) to (2);
		\draw [arrow, markat=0.66] (9) to (6);
		\draw [arrow, markat=0.66] (7) to (1);
	\end{pgfonlayer}
      \end{tikzpicture}}}
   \right)
    \;\after\;
    \left(
      \vcenter{\hbox{\begin{tikzpicture}[scale=0.75]
	\begin{pgfonlayer}{nodelayer}
		\node [style=none] (0) at (-1, 1) {};
		\node [style=none] (1) at (1, 1) {};
		\node [style=none] (2) at (-1, 0) {};
		\node [style=box] (3) at (-0.75, 0) {$\;\;f_*\;\;$};
		\node [style=none] (4) at (-0.5, 0) {};
		\node [style=none] (5) at (0.5, 0) {};
		\node [style=box] (6) at (0.75, 0) {$\;\;\;f\;\;$};
		\node [style=none] (7) at (1, 0) {};
		\node [style=none] (8) at (-0.75, -1) {};
		\node [style=none] (9) at (0.75, -1) {};
	\end{pgfonlayer}
	\begin{pgfonlayer}{edgelayer}
		\draw [arrow] (3) to (8);
		\draw [arrow, bend right=90, looseness=2.25] (5) to (4);
		\draw [arrow] (0) to (2);
		\draw [arrow, markat=0.66] (9) to (6);
		\draw [arrow, markat=0.66] (7) to (1);
	\end{pgfonlayer}
      \end{tikzpicture}}}
  \right)
    \;\;=\;\;
    \vcenter{\hbox{\begin{tikzpicture}[scale=0.75]
	\begin{pgfonlayer}{nodelayer}
		\node [style=none] (0) at (-1.25, 0.75) {};
		\node [style=none] (1) at (1.25, 0.75) {};
		\node [style=none] (2) at (-1.75, 0.33) {};
		\node [style=none] (3) at (-0.1, 0.33) {};
		\node [style=none] (4) at (0.1, 0.33) {};
		\node [style=none] (5) at (1.75, 0.33) {};
		\node [style=box] (6) at (-1, 0) {$\;\;g_*\;\;$};
		\node [style=none] (7) at (-0.75, 0) {};
		\node [style=none] (8) at (-0.25, 0.2) {};
		\node [style=none] (9) at (0.25, 0.2) {};
		\node [style=none] (10) at (0.75, 0) {};
		\node [style=box] (11) at (1, 0) {$\;\;\;g\;\;$};
		\node [style=none] (12) at (-1.25, -1) {};
		\node [style=box] (13) at (-1, -1) {$\;\;f_*\;\;$};
		\node [style=none] (14) at (-0.75, -1) {};
		\node [style=none] (15) at (0.75, -1) {};
		\node [style=box] (16) at (1, -1) {$\;\;\;f\;\;$};
		\node [style=none] (17) at (1.25, -1) {};
		\node [style=none] (18) at (-1.75, -1.4) {};
		\node [style=none] (19) at (-0.1, -1.4) {};
		\node [style=none] (20) at (0.1, -1.4) {};
		\node [style=none] (21) at (1.75, -1.4) {};
		\node [style=none] (22) at (-1, -2) {};
		\node [style=none] (23) at (1, -2) {};
		\node [style=none] (24) at (-1.25, 0) {};
		\node [style=none] (25) at (1.25, 0) {};
	\end{pgfonlayer}
	\begin{pgfonlayer}{edgelayer}
		\draw [arrow] (17.center) to (25.center);
		\draw [arrow, markat=0.66] (25.center) to (1.center);
		\draw [dashed, gray] (20.center) to node{} (21.center);
		\draw [arrow, bend right=90, looseness=2.25] (10.center) to (7.center);
		\draw [dashed, gray] (18) to node{} (19);
		\draw [dashed, gray] (2) to node{} (18);
		\draw [in=90, out=270] (8) to (14.center);
		\draw [dashed, gray] (4.center) to node{} (20.center);
		\draw [arrow] (23.center) to (16);
		\draw [in=90, out=-90] (9) to (15.center);
		\draw [dashed, gray] (19) to node{} (3);
		\draw [dashed, gray] (5.center) to node{} (4.center);
		\draw [arrow] (13) to (22.center);
		\draw [arrow] (0.center) to (24.center);
		\draw [arrow] (24.center) to (12.center);
		\draw [dashed, gray] (3) to node{} (2);
		\draw [dashed, gray] (21.center) to node{} (5.center);
		\draw [arrow, bend right=90, looseness=2.50] (9.south) to (8.south);
	\end{pgfonlayer}
      \end{tikzpicture}}}
  \]
\item The tensor unit $I$ and the tensor product of objects are
  inherited from $\cat{C}$, and the tensor product of morphisms is
  defined as follows.
  \[
    \left(
      \vcenter{\hbox{\begin{tikzpicture}[scale=0.75]
	\begin{pgfonlayer}{nodelayer}
		\node [style=none] (0) at (-1, 1) {};
		\node [style=none] (1) at (1, 1) {};
		\node [style=none] (2) at (-1, 0) {};
		\node [style=box] (3) at (-0.75, 0) {$\;\;f_*\;\;$};
		\node [style=none] (4) at (-0.5, 0) {};
		\node [style=none] (5) at (0.5, 0) {};
		\node [style=box] (6) at (0.75, 0) {$\;\;\;f\;\;$};
		\node [style=none] (7) at (1, 0) {};
		\node [style=none] (8) at (-0.75, -1) {};
		\node [style=none] (9) at (0.75, -1) {};
	\end{pgfonlayer}
	\begin{pgfonlayer}{edgelayer}
		\draw [arrow] (3) to (8);
		\draw [arrow, bend right=90, looseness=2.25] (5) to (4);
		\draw [arrow] (0) to (2);
		\draw [arrow, markat=0.66] (9) to (6);
		\draw [arrow, markat=0.66] (7) to (1);
	\end{pgfonlayer}
      \end{tikzpicture}}}
   \right)
    \;\tensor\;
    \left(
      \vcenter{\hbox{\begin{tikzpicture}[scale=0.75]
	\begin{pgfonlayer}{nodelayer}
		\node [style=none] (0) at (-1, 1) {};
		\node [style=none] (1) at (1, 1) {};
		\node [style=none] (2) at (-1, 0) {};
		\node [style=box] (3) at (-0.75, 0) {$\;\;g\vphantom{f}_*\;\;$};
		\node [style=none] (4) at (-0.5, 0) {};
		\node [style=none] (5) at (0.5, 0) {};
		\node [style=box] (6) at (0.75, 0) {$\;\;\;g\vphantom{f}\;\;$};
		\node [style=none] (7) at (1, 0) {};
		\node [style=none] (8) at (-0.75, -1) {};
		\node [style=none] (9) at (0.75, -1) {};
	\end{pgfonlayer}
	\begin{pgfonlayer}{edgelayer}
		\draw [arrow] (3) to (8);
		\draw [arrow, bend right=90, looseness=2.25] (5) to (4);
		\draw [arrow] (0) to (2);
		\draw [arrow, markat=0.66] (9) to (6);
		\draw [arrow, markat=0.66] (7) to (1);
	\end{pgfonlayer}
      \end{tikzpicture}}}
   \right)
    \;\;=\;\;
    \vcenter{\hbox{\begin{tikzpicture}[scale=0.75]
	\begin{pgfonlayer}{nodelayer}
		\node [style=none] (0) at (-2.5, 1.25) {};
		\node [style=none] (1) at (-1.25, 1.25) {};
		\node [style=none] (2) at (1.25, 1.25) {};
		\node [style=none] (3) at (2.5, 1.25) {};
		\node [style=none] (4) at (-0.75, 0.75) {};
		\node [style=none] (5) at (0.75, 0.75) {};
		\node [style=None] (6) at (-3, 0.5) {};
		\node [style=None] (7) at (-0.25, 0.5) {};
		\node [style=None] (8) at (0.25, 0.5) {};
		\node [style=None] (9) at (3, 0.5) {};
		\node [style=none] (10) at (-2.5, 0) {};
		\node [style=box] (11) at (-2.25, 0) {$\;\;g_*\vphantom{f_*}\;\;$};
		\node [style=none] (12) at (-2, 0) {};
		\node [style=none] (13) at (-1.25, 0) {};
		\node [style=box] (14) at (-1, 0) {$\;\;f_*\;\;$};
		\node [style=none] (15) at (-0.75, 0) {};
		\node [style=none] (16) at (0.75, 0) {};
		\node [style=box] (17) at (1, 0) {$\;\;f\;\;$};
		\node [style=none] (18) at (1.25, 0) {};
		\node [style=none] (19) at (2, 0) {};
		\node [style=box] (20) at (2.25, 0) {$\;\;g\vphantom{f}\;\;$};
		\node [style=none] (21) at (2.5, 0) {};
		\node [style=None] (22) at (-3, -0.5) {};
		\node [style=None] (23) at (-0.25, -0.5) {};
		\node [style=None] (24) at (0.25, -0.5) {};
		\node [style=None] (25) at (3, -0.5) {};
		\node [style=none] (26) at (-2.25, -1) {};
		\node [style=none] (27) at (-1, -1) {};
		\node [style=none] (28) at (1, -1) {};
		\node [style=none] (29) at (2.25, -1) {};
	\end{pgfonlayer}
	\begin{pgfonlayer}{edgelayer}
		\draw [arrow] (28.center) to (17);
		\draw [arrow] (0.center) to (10.center);
		\draw [arrow, markat=0.66] (14) to (27.center);
		\draw [arrow] (1.center) to (13.center);
		\draw [arrow, markat=0.66] (18.center) to (2.center);
		\draw [arrow, markat=0.66] (11) to (26.center);
		\draw [arrow, markat=0.66] (21.center) to (3.center);
		\draw [arrow] (29.center) to (20);
		\draw [arrow, bend right=60] (5.center) to (4.center);
		\draw [arrow, bend right=90, looseness=1.75] (16.center) to (15.center);
		\draw [cross, in=-60, out=90] (19.center) to (5.center);
		\draw [cross, in=90, out=240] (4.center) to (12.center);
		\draw [dashed, gray] (23) to node{} (22);
		\draw [dashed, gray] (8) to node{} (24);
		\draw [dashed, gray] (6) to node{} (7);
		\draw [dashed, gray] (7) to node{} (23);
		\draw [dashed, gray] (25) to node{} (9);
		\draw [dashed, gray] (22) to node{} (6);
		\draw [dashed, gray] (9) to node{} (8);
		\draw [dashed, gray] (24) to node{} (25);
	\end{pgfonlayer}
      \end{tikzpicture}}}
  \]
\item The dagger is defined as follows.
  \[
      \left(\vcenter{\hbox{\begin{tikzpicture}[scale=0.75]
	\begin{pgfonlayer}{nodelayer}
		\node [style=none] (0) at (-1, 1) {};
		\node [style=none] (1) at (1, 1) {};
		\node [style=none] (2) at (-1, 0) {};
		\node [style=box] (3) at (-0.75, 0) {$\;\;f_*\;\;$};
		\node [style=none] (4) at (-0.5, 0) {};
		\node [style=none] (5) at (0.5, 0) {};
		\node [style=box] (6) at (0.75, 0) {$\;\;\;f\;\;$};
		\node [style=none] (7) at (1, 0) {};
		\node [style=none] (8) at (-0.75, -1) {};
		\node [style=none] (9) at (0.75, -1) {};
	\end{pgfonlayer}
	\begin{pgfonlayer}{edgelayer}
		\draw [arrow] (3) to (8);
		\draw [arrow, bend right=90, looseness=2.25] (5) to (4);
		\draw [arrow] (0) to (2);
		\draw [arrow, markat=0.66] (9) to (6);
		\draw [arrow, markat=0.66] (7) to (1);
	\end{pgfonlayer}
      \end{tikzpicture}}}
    \right)^\dag
    \;\;=\;\;
    \vcenter{\hbox{\begin{tikzpicture}[scale=0.75]
	\begin{pgfonlayer}{nodelayer}
		\node [style=none] (0) at (-2, 1) {};
		\node [style=none] (1) at (2, 1) {};
		\node [style=None] (2) at (-2.25, 0.5) {};
		\node [style=none] (3) at (-0.5, 0.5) {};
		\node [style=None] (4) at (-0.25, 0.5) {};
		\node [style=None] (5) at (0.25, 0.5) {};
		\node [style=none] (6) at (0.5, 0.5) {};
		\node [style=None] (7) at (2.25, 0.5) {};
		\node [style=none] (8) at (-2, 0) {};
		\node [style=box] (9) at (-1.5, 0) {$\;\;f^*\;\;$};
		\node [style=none] (10) at (-1.25, 0) {};
		\node [style=none] (11) at (-0.5, 0) {};
		\node [style=none] (12) at (0.5, 0) {};
		\node [style=none] (13) at (1.25, 0) {};
		\node [style=box] (14) at (1.5, 0) {$\;\;f^\dag\;\;$};
		\node [style=none] (15) at (2, 0) {};
		\node [style=None] (16) at (-2.25, -0.75) {};
		\node [style=None] (17) at (-0.25, -0.75) {};
		\node [style=None] (18) at (0.25, -0.75) {};
		\node [style=None] (19) at (2.25, -0.75) {};
		\node [style=none] (20) at (-2, -1) {};
		\node [style=none] (21) at (2, -1) {};
	\end{pgfonlayer}
	\begin{pgfonlayer}{edgelayer}
		\draw [dashed, gray] (5) to node{} (18);
		\draw (6.north) to (12.south);
		\draw [dashed, gray] (16) to node{} (17);
		\draw [bend left=270, looseness=2.25] (10) to (11);
		\draw [dashed, gray] (19) to node{} (7);
		\draw [arrow, bend right=90, looseness=1.50] (6) to (3);
		\draw [arrow, markat=0.75] (15) to (1);
		\draw [arrow] (21) to (15);
		\draw [dashed, gray] (2) to node{} (16);
		\draw [dashed, gray] (17) to node{} (4);
		\draw [dashed, gray] (18) to node{} (19);
		\draw [dashed, gray] (7) to node{} (5);
		\draw [dashed, gray] (4) to node{} (2);
		\draw [bend left=270, looseness=2.00] (12) to (13);
		\draw (11.south) to (3.north);
		\draw [arrow, markat=0.66] (8) to (20);
		\draw [arrow, markat=0.33] (0) to (8);
	\end{pgfonlayer}
      \end{tikzpicture}}}
  \]
\item Finally, the cup $\eta_A \colon I \to A^* \tensor A$ in
  $\CPM(\cat{C})$ is given by $(\eta_A)_* \tensor \eta_A = \eta_A
  \tensor \eta_A$ in $\cat{C}$
  (\textit{i.e.}~with ancillary system $I$ and Kraus morphism $\eta_A$ in $\cat{C}$).
\end{itemize}
If $\cat{C}$ is the dagger compact category $\Cat{FHilb}$ of finite-dimensional
Hilbert spaces and linear maps, then $\CPM(\Cat{FHilb})$ is precisely the
category of finite-dimensional Hilbert spaces and completely positive
maps~\cite{selinger:completelypositive}. To come closer to the traditional
setting, we may identify the objects $H$ of $\CPM(\Cat{FHilb})$ with
their algebras of operators $B(H)$. 

The notion of complete positivity makes perfect sense for normal linear
maps between von Neumann algebras $B(H)$ for Hilbert space $H$ of
arbitrary dimension. We now present a $\CP$--construction that works on 
dagger monoidal categories that are not necessarily compact,
and that reduces to the previous construction in the compact
case. Subsequently we prove that applying this construction to
the category of Hilbert spaces indeed results in the traditional
completely positive maps as morphisms.

The idea behind the following construction is to rewrite the morphisms 
\begin{equation}\label{eq:idea}
\begin{aligned}\begin{tikzpicture}[scale=0.75]
	\begin{pgfonlayer}{nodelayer}
		\node [style=none] (0) at (-1, 1) {};
		\node [style=none] (1) at (1, 1) {};
		\node [style=none] (2) at (-1, 0) {};
		\node [style=box] (3) at (-0.75, 0) {$\;\;f_*\;\;$};
		\node [style=none] (4) at (-0.5, 0) {};
		\node [style=none] (5) at (0.5, 0) {};
		\node [style=box] (6) at (0.75, 0) {$\;\;\;f\;\;$};
		\node [style=none] (7) at (1, 0) {};
		\node [style=none] (8) at (-0.75, -1) {};
		\node [style=none] (9) at (0.75, -1) {};
	\end{pgfonlayer}
	\begin{pgfonlayer}{edgelayer}
		\draw [arrow] (3) to (8);
		\draw [arrow, bend right=90, looseness=2.25] (5) to (4);
		\draw [arrow] (0) to (2);
		\draw [arrow, markat=0.66] (9) to (6);
		\draw [arrow, markat=0.66] (7) to (1);
	\end{pgfonlayer}
\end{tikzpicture}\end{aligned}
\qquad\mbox{ as }\qquad
\begin{aligned}\begin{tikzpicture}[scale=0.75]
	\begin{pgfonlayer}{nodelayer}
		\node [style=box] (fd) at (0, 0.85) {$\;\;f^\dag\;\;$};
		\node [style=box] (f) at (0, -0.85) {$\;\;f\phantom{^\dag}\;\;$};
	\end{pgfonlayer}
	\begin{pgfonlayer}{edgelayer}
		\draw [dashed, gray] (.4,0) circle(.35);
		\draw (-.4,-.85) to (-.4,.85);
		\draw (.4,-.85) to (.4,-.35);
		\draw (.4,.85) to (.4,.35);
		\draw (fd) to (0,1.5);
		\draw (f) to (0,-1.5);
	\end{pgfonlayer}
\end{tikzpicture}\end{aligned}
\end{equation}
in a form not using compactness. Composition becomes plugging the hole. For this to be well-defined, we need to take the Kraus morphism $f$ as a representative seriously.

\begin{definition}\label{def:equivalence}
Define an equivalence relation on $\bigcup_{X \in \cat{C}} \cat{C}(A,X \otimes B)$ by setting $f \sim g$ when
  \[\begin{aligned}\begin{tikzpicture}[xscale=0.75,yscale=.65]
	\begin{pgfonlayer}{nodelayer}
		\node [style=box] (fd) at (0, 1.5) {$\;\;f^\dag\;\;$};
		\node [style=box] (hd) at (.95, 0.5) {$\;\;h^\dag\;\;$};
		\node [style=box] (h) at (.95, -0.5) {$\;\;h\phantom{^\dag}\;\;$};
		\node [style=box] (f) at (0, -1.5) {$\;\;f\phantom{^\dag}\;\;$};
	\end{pgfonlayer}
	\begin{pgfonlayer}{edgelayer}
	    \draw (fd) to (0,2.25);
	    \draw (f) to (0,-2.25);
	    \draw (-.5,-1.5) to (-.5,1.5);
	    \draw (h) to (hd);
	    \draw (.5,-1.5) to (.5,-.5);
	    \draw (.5,1.5) to (.5,.5);
	    \draw (1.4,.5) to (1.4,2.25);
	    \draw (1.4,-.5) to (1.4,-2.25);
	\end{pgfonlayer}
  \end{tikzpicture}\end{aligned}
  \quad = \quad 
  \begin{aligned}\begin{tikzpicture}[xscale=0.75,yscale=.65]
	\begin{pgfonlayer}{nodelayer}
		\node [style=box] (fd) at (0, 1.5) {$\;\;g^\dag\;\;$};
		\node [style=box] (hd) at (.95, 0.5) {$\;\;h^\dag\;\;$};
		\node [style=box] (h) at (.95, -0.5) {$\;\;h\phantom{^\dag}\;\;$};
		\node [style=box] (f) at (0, -1.5) {$\;\;g\phantom{^\dag}\;\;$};
	\end{pgfonlayer}
	\begin{pgfonlayer}{edgelayer}
	    \draw (fd) to (0,2.25);
	    \draw (f) to (0,-2.25);
	    \draw (-.5,-1.5) to (-.5,1.5);
	    \draw (h) to (hd);
	    \draw (.5,-1.5) to (.5,-.5);
	    \draw (.5,1.5) to (.5,.5);
	    \draw (1.4,.5) to (1.4,2.25);
	    \draw (1.4,-.5) to (1.4,-2.25);
	\end{pgfonlayer}
  \end{tikzpicture}\end{aligned}\]
  for all $h \colon B \otimes Y \to Z$ in a dagger monoidal category $\cat{C}$.
\end{definition}

\begin{lemma}\label{lem:ancilla}
  If $f \colon A \to X \otimes B$ and $v \colon X \to Y$ in a dagger monoidal category satisfy $v^\dag v = 1$, then $f \sim (v \otimes 1)f$.
\end{lemma}
\begin{proof}
  Immediate from the definition of the equivalence relation.  
\end{proof}

\begin{definition}
  For a dagger monoidal category $\cat{C}$, define a
  new category $\CP(\cat{C})$ as follows.
  \begin{itemize}
  \item The objects of $\CP(\cat{C})$ are those of $\cat{C}$.
  \item The morphisms $A \to B$ of $\CP(\cat{C})$ are equivalence classes of morphisms $f \colon A \to C \otimes B$ of $\cat{C}$.
  \[
    \CP(\cat{C})(A,B) = 
    \{ f \in \cat{C}(A, X \otimes B) \mid X \in \cat{C} \} \,\slash \mathop{\sim}
  \]
  We call a representative $f$ a \emph{Kraus morphism} and $X$ its \emph{ancillary system}.
 \item The coherence isomorphism $A \to I \otimes A$ of $\cat{C}$ represents the identity on $A$ in $\CP(\cat{C})$, and composition is defined as follows.
   \[
   \left[\begin{aligned}\begin{tikzpicture}[scale=0.75]
	\begin{pgfonlayer}{nodelayer}
	     \node [style=box] at (0,0) {$\;\;g\vphantom{f}\;\;$};
	\end{pgfonlayer}
	\begin{pgfonlayer}{edgelayer}
     \draw (0,0) to (0,-1);
     \draw (-.4,0) to (-.4,1);
     \draw (.4,0) to (.4,1);
	\end{pgfonlayer}
   \end{tikzpicture}\end{aligned}\right]
   \;\;\circ\;\;
   \left[\begin{aligned}\begin{tikzpicture}[scale=0.75]
	\begin{pgfonlayer}{nodelayer}
	     \node [style=box] at (0,0) {$\;\;f\;\;$};
	\end{pgfonlayer}
	\begin{pgfonlayer}{edgelayer}
     \draw (0,0) to (0,-1);
     \draw (-.4,0) to (-.4,1);
     \draw (.4,0) to (.4,1);
	\end{pgfonlayer}
   \end{tikzpicture}\end{aligned}\right]
   \;\;=\;\;
   \left[\begin{aligned}\begin{tikzpicture}[scale=0.75]
	\begin{pgfonlayer}{nodelayer}
	     \node [style=box] at (0,-.4) {$\;\;f\;\;$};
	     \node [style=box] at (.4,.5) {$\;\;g\vphantom{f}\;\;$};
	\end{pgfonlayer}
	\begin{pgfonlayer}{edgelayer}
     \draw (0,-.4) to (0,-1);
     \draw (-.4,-.4) to (-.4,1);
     \draw (.4,-.4) to (.4,.5);
     \draw (.05,.5) to (.05,1);
     \draw (.75,.5) to (.75,1);
	\end{pgfonlayer}
   \end{tikzpicture}\end{aligned}\right]
   \]
  \end{itemize}
\end{definition}

\begin{proposition}\label{prop:welldefined}
  If $\cat{C}$ is a dagger monoidal category, $\CP(\cat{C})$ is indeed a well-defined category.
\end{proposition}
\begin{proof}
  Composition is well-defined, because if $f \sim f'$ and $g \sim g'$, then $\alpha^\dag (1 \otimes g) f \sim \alpha^\dag (1 \otimes g') f'$, where we write $\alpha$ for the coherence isomorphism $(X \otimes Y) \otimes C \to X \otimes (Y \otimes C)$. Applying Lemma~\ref{lem:ancilla} to the same coherence isomorphism $\alpha$ shows that composition is associative. Applying Lemma~\ref{lem:ancilla} to the coherence isomorphisms $X \otimes I \to X$ and $I \otimes X \to X$ shows that pre- and post-composition with identities leaves morphisms invariant.
\end{proof}

Before we go on to show that $\CP(\cat{C})$ is a monoidal category, let us discuss size issues. In general, $\CPM(\cat{C})$ is just as large as $\cat{C}$, but this is less clear for $\CP(\cat{C})$.
If $\cat{C}$ is small, then so is $\CP(\cat{C})$, because set-indexed unions of sets are again sets~\cite{halmos:settheory}. 
If $\cat{C}$ is essentially small, meaning that it is dagger equivalent to a small category, then so is $\CP(\cat{C})$, by Lemma~\ref{lem:ancilla}. 
If $\cat{C}$ is locally small, it is not clear that $\CP(\cat{C})$ is again locally small. This does hold for large but locally small categories $\cat{C}$ such as $\Cat{Hilb}$ and $\Cat{Rel}$, that are well-powered and have good notions of image~\cite{heunenjacobs:daggerkernel} and rank: if a morphism $f \colon A \to X \otimes B$ preserves rank, its image can at most be as large as $A$, so we may restrict to ancillary systems $X$ that are subobjects of $A$, which constitute a set. 
Compare this to the existence of a minimal Stinespring dilation, or the fact that it is enough to check $n$-positivity to verify that a map between $n$-dimensional systems is completely positive by Choi's theorem~\cite{paulsen:completelypositive}. 

\begin{proposition}\label{prop:tensorinCP}
  If $\cat{C}$ is a dagger braided monoidal category, then $\CP(\cat{C})$ is a braided monoidal category.
  The tensor unit $I$ and tensor products of objects are as in $\cat{C}$, 
  and the tensor product of morphisms is defined as follows.
  \[
   \left[\begin{aligned}\begin{tikzpicture}[scale=0.75]
	\begin{pgfonlayer}{nodelayer}
	     \node [style=box] at (0,0) {$\;\;f\;\;$};
	\end{pgfonlayer}
	\begin{pgfonlayer}{edgelayer}
     \draw (0,0) to (0,-.8);
     \draw (-.4,0) to (-.4,1);
     \draw (.4,0) to (.4,1);
	\end{pgfonlayer}
   \end{tikzpicture}\end{aligned}\right]
   \;\;\otimes\;\;
   \left[\begin{aligned}\begin{tikzpicture}[scale=0.75]
	\begin{pgfonlayer}{nodelayer}
	     \node [style=box] at (0,0) {$\;\;g\vphantom{f}\;\;$};
	\end{pgfonlayer}
	\begin{pgfonlayer}{edgelayer}
     \draw (0,0) to (0,-.8);
     \draw (-.4,0) to (-.4,1);
     \draw (.4,0) to (.4,1);
	\end{pgfonlayer}
   \end{tikzpicture}\end{aligned}\right]
   \;\;=\;\;
   \left[\begin{aligned}\begin{tikzpicture}[scale=0.75]
	\begin{pgfonlayer}{nodelayer}
	     \node [style=box] at (0,0) {$\;\;f\;\;$};
	     \node [style=box] at (1.5,0) {$\;\;g\vphantom{f}\;\;$};
	\end{pgfonlayer}
	\begin{pgfonlayer}{edgelayer}
     \draw (0,0) to (0,-.8);
     \draw (1.5,0) to (1.5,-.8);
     \draw (-.4,0) to (-.4,1);
     \draw (1.9,0) to (1.9,1);
     \draw (1.1,.2) to [out=up,in=down] (.4,1);
     \draw[cross] (.4,.2) to [out=up,in=down] (1.1,1);
	\end{pgfonlayer}
   \end{tikzpicture}\end{aligned}\right]
  \]
   Moreover, if $\cat{C}$ is dagger symmetric monoidal, then $\CP(\cat{C})$ is symmetric monoidal.
\end{proposition}
\begin{proof}
  Write $\alpha_{A,B,C}\colon (A \otimes B) \otimes C \to A \otimes (B \otimes C)$, $\lambda_A \colon I \otimes A \to A$, and $\rho_A \colon A \otimes I \to A$ for the coherence isomorphisms of $\cat{C}$, and $\sigma_{A,B} \colon A \otimes B \to B \otimes A$ for the braiding. The coherence isomorphisms in $\CP(\cat{C})$ are 
  \begin{align*}
    \alpha_{A,B,C} & = [ \lambda_{A \otimes (B \otimes C)} \alpha_{A,B,C} ], &
    \lambda_A & = [ 1_{I \otimes A} ], &
    \rho_A & = [ \sigma_{A,I} ], &
    \sigma_{A,B} & = [ \lambda_{B \otimes A} \sigma_{A,B} ].
  \end{align*}
  Verifying the triangle, pentagon, and hexagon laws is laborious but straightforward, and repeatedly uses Lemma~\ref{lem:ancilla} and $\lambda_I=\rho_I$~\cite[1.1]{joyalstreet:braidedtensorcategories}. Similarly, $\CP(\cat{C})$ inherits symmetry from $\cat{C}$: 
  \begin{align*}
    \sigma_{A,B} \sigma_{B,A} 
    & = [ \lambda_{B \otimes A} \sigma_{A,B}] \circ [\lambda_{A \otimes B} \sigma_{B,A}] \\
    & = [ (1_{I \otimes I} \otimes \sigma_{B,A}) \alpha^\dag_{I,I,B \otimes A} \lambda_{I \otimes (B \otimes A)} (1_I \otimes \sigma_{A,B}) \lambda_{A \otimes B} ] \\
    & = [ \alpha^\dag_{I,I,A \otimes B} \lambda_{I \otimes (A \otimes B)} \lambda_{A \otimes B} ] \\
    & = [ \lambda_{A \otimes B} ] \\
    & = 1_{A \otimes B},
  \end{align*}
  where the third equation follows from $\sigma_{A,B}\sigma_{B,A}=1_{A \otimes B}$ in $\cat{C}$, and the fourth equation follows from an application of Lemma~\ref{lem:ancilla} to $\rho_I \colon I \otimes I \to I$.
\end{proof}

Notice that $\CP(\cat{C})$ does not obviously have a dagger, even though $\cat{C}$ does. 
Together with the fact that $\CP(\cat{C})$ is not obviously locally small when $\cat{C}$ is, this suggests that the \CP--construction is not straightforwardly a monad on dagger braided monoidal categories.

\begin{proposition}\label{prop:generalizedCP}
  If $\cat{C}$ is a dagger compact category, then $\CPM(\cat{C})$ and
  $\CP(\cat{C})$ are isomorphic dagger symmetric monoidal categories. 
  Hence if $\cat{C}$ is a dagger compact category, then so is $\CP(\cat{C})$.
\end{proposition}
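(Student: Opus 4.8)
The plan is to exhibit an isomorphism that is the identity on objects and, on morphisms, is given by bending wires using the compact structure. A $\CPM$-morphism $A \to B$ is a morphism $A^* \tensor A \to B^* \tensor B$ of $\cat{C}$, whereas a $\CP$-morphism $A \to B$ is an endomorphism of $B \tensor A$. In a compact category an input wire of type $A^*$ is interchangeable with an output wire of type $A$, and an output wire of type $B^*$ with an input wire of type $B$; so I define a map $F$ on hom-sets by bending the $A^*$ input of a $\CPM$-morphism up into an $A$ output and the $B^*$ output down into a $B$ input, and then reordering the four legs by swaps to land on $B \tensor A \to B \tensor A$. Since bending along the cups $\eta$ and caps $\eta^\dagger$ is an invertible operation $\cat{C}(A^* \tensor A, B^* \tensor B) \cong \cat{C}(B \tensor A, B \tensor A)$ on the underlying hom-sets, this map is automatically independent of any chosen Kraus morphism; the only thing to check is that it carries the $\CPM$-form onto the $\CP$-form.

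For that, I take a $\CPM$-morphism with Kraus morphism $f \colon A \to C \tensor B$ and compute its image under the bending. The key identity is the dagger-compact relation $f_* = (f^\dagger)^*$: when the starred legs of $f_*$ are bent, the box $f_*$ straightens into $f^\dagger$, and the cap $\eta_C^\dagger$ that contracts the ancilla $C^* \tensor C$ becomes, via the snake equations, a plain $C$-wire joining $f$ and $f^\dagger$. The crossing produced by reordering the bent legs is exactly the swap $\swapmor$. Reading off the result gives $(f^\dagger \tensor \id)(\id \tensor \swapmor)(f \tensor \id)$, i.e.\ the $\CP$-morphism with the same Kraus morphism $f$ (up to the harmless reordering of $C \tensor B$ versus $B \tensor C$). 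The inverse bending sends a $\CP$-morphism with Kraus $f$ back to the $\CPM$-morphism with Kraus $f$, so $F$ restricts to a bijection on each hom-set.

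It then remains to verify that $F$ is a strict symmetric monoidal dagger functor, which is possible because objects, tensor and unit are literally shared. Functoriality and monoidality are diagrammatic: the $\CPM$ identity on $A$ has Kraus $\id[A]$ and underlying morphism $\id[A^* \tensor A]$, whose bending is precisely the swap $\swapmor_{A,A}$, the $\CP$ identity; and the composition and tensor diagrams in the two categories (displayed above and in Proposition~\ref{prop:tensorinCP}) are intertwined by the bending, since bending is natural and both constructions are assembled from the same Kraus data. For the dagger, I transport the dagger of $\CPM(\cat{C})$ along $F$; unwinding the definition shows the resulting involution on $\CP(\cat{C})$ is again given by bending the $\CPM$-dagger picture built from $f^*$ and $f^\dagger$, so $F$ is by construction a dagger functor and $\CP(\cat{C})$ is thereby a dagger symmetric monoidal category.

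The main obstacle I expect is bookkeeping rather than conceptual: keeping the orientations of the starred and unstarred wires, the placement of the swaps, and the directions of the cups and caps consistent throughout the bending, so that the crossing genuinely produces $\swapmor$ and the ancilla cap genuinely becomes a bare wire. Once the single identity $f_* = (f^\dagger)^*$ and the snake equations are in hand, each of well-definedness, functoriality, monoidality and dagger-preservation reduces to a routine deformation of the displayed diagrams.
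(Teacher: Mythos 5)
Your proposal is correct and takes essentially the same route as the paper: the paper's proof likewise defines the isomorphism $\CPM(\cat{C}) \to \CP(\cat{C})$ as the identity on objects and, on morphisms, as the wire-bending that carries the $\CPM$-form with Kraus morphism $f$ to the $\CP$-form with the same Kraus morphism, declaring the assignment ``clearly invertible'' and the functoriality, monoidality, and dagger checks ``light work.'' Your explicit observation that bending is a bijection $\cat{C}(A^* \tensor A,\, B^* \tensor B) \cong \cat{C}(B \tensor A,\, B \tensor A)$ on underlying hom-sets---so that well-definedness is independent of the chosen Kraus decomposition---simply spells out what the paper leaves implicit.
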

\begin{proof}
  The isomorphism $\CPM(\cat{C}) \to \CP(\cat{C})$ is given
  by $A \mapsto A$ on objects, and acts as follows on morphisms.
  \[
    \begin{aligned}\begin{tikzpicture}[scale=0.75]
	\begin{pgfonlayer}{nodelayer}
		\node [style=none] (0) at (-1, 1) {};
		\node [style=none] (1) at (1, 1) {};
		\node [style=none] (2) at (-1, 0) {};
		\node [style=box] (3) at (-0.75, 0) {$\;\;f_*\;\;$};
		\node [style=none] (4) at (-0.5, 0) {};
		\node [style=none] (5) at (0.5, 0) {};
		\node [style=box] (6) at (0.75, 0) {$\;\;\;f\;\;$};
		\node [style=none] (7) at (1, 0) {};
		\node [style=none] (8) at (-0.75, -1) {};
		\node [style=none] (9) at (0.75, -1) {};
	\end{pgfonlayer}
	\begin{pgfonlayer}{edgelayer}
		\draw (3) to (8);
		\draw [bend right=90, looseness=2.25] (5) to (4);
		\draw (2) to (0);
		\draw (6) to (9);
		\draw (7) to (1);
	\end{pgfonlayer}
    \end{tikzpicture}\end{aligned}
    \quad\longmapsto\quad
    \left[\begin{aligned}\begin{tikzpicture}[xscale=0.75,yscale=.66]
	\begin{pgfonlayer}{nodelayer}
		\node [style=none] (0) at (-1, 1) {};
		\node [style=none] (1) at (-0.5, 1) {};
		\node [style=none] (2) at (-1, 0) {};
		\node [style=box] (3) at (-0.75, 0) {$\;\;f\;\;$};
		\node [style=none] (4) at (-0.5, 0) {};
		\node [style=none] (5) at (-0.75, -1) {};
	\end{pgfonlayer}
	\begin{pgfonlayer}{edgelayer}
		\draw (5) to (3);
		\draw (4) to (1);
		\draw (2) to (0);
	\end{pgfonlayer}
    \end{tikzpicture}\end{aligned}\right]
  \]
  This assignment is clearly invertible. To see that it gives a
  well-defined functor preserving daggers and symmetric monoidal
  structure takes light work. 
  See~\cite[Lemma~2.2]{coecke:mix} for details.
\end{proof}

If canonical Kraus morphisms are available, as in $\cat{Hilb}$ and $\cat{Rel}$, we may use an alternative description of $\CP(\cat{C})$ that does not use equivalence relations, as in~\cite{coeckeheunen:qpl}. The following corollary spells this out for compact $\cat{C}$, such as $\cat{Rel}$.

\begin{corollary}\label{cor:CPwithouteqrel}
  In a dagger compact category, $f \sim g$ if and only if
  \[
    \begin{aligned}\begin{tikzpicture}[scale=0.75]
	\begin{pgfonlayer}{nodelayer}
		\node [style=none] (0) at (1, 1.2) {};
		\node [style=none] (1) at (0, 1.2) {};
		\node [style=none] (2) at (1, 0.5) {};
		\node [style=none] (3) at (0.25, 0.5) {};
		\node [style=box] (4) at (0, 0.5) {$\;\;f^\dag\;\;$};
		\node [style=none] (5) at (-0.25, 0.5) {};
		\node [style=none] (6) at (1, -0.5) {};
		\node [style=none] (7) at (0.25, -0.5) {};
		\node [style=box] (8) at (0, -0.5) {$\;\;f\phantom{^\dag}\;\;$};
		\node [style=none] (9) at (-0.25, -0.5) {};
		\node [style=none] (10) at (1, -1.2) {};
		\node [style=none] (11) at (0, -1.2) {};
	\end{pgfonlayer}
	\begin{pgfonlayer}{edgelayer}
		\draw (9.center) to (5.center);
		\draw [in=90, out=270, looseness=1.25] (3.center) to (6.south);
		\draw (10) to (6);
		\draw (4) to (1.center);
		\draw [in=90, out=-90, looseness=1.25] (2.north) to (7.center);
		\draw (2) to (0);
		\draw (8) to (11.center);
	\end{pgfonlayer}\end{tikzpicture}\end{aligned}
	=
    \begin{aligned}\begin{tikzpicture}[scale=0.75]
	\begin{pgfonlayer}{nodelayer}
		\node [style=none] (0) at (1, 1.2) {};
		\node [style=none] (1) at (0, 1.2) {};
		\node [style=none] (2) at (1, 0.5) {};
		\node [style=none] (3) at (0.25, 0.5) {};
		\node [style=box] (4) at (0, 0.5) {$\;\;g^\dag\;\;$};
		\node [style=none] (5) at (-0.25, 0.5) {};
		\node [style=none] (6) at (1, -0.5) {};
		\node [style=none] (7) at (0.25, -0.5) {};
		\node [style=box] (8) at (0, -0.5) {$\;\;g\phantom{^\dag}\;\;$};
		\node [style=none] (9) at (-0.25, -0.5) {};
		\node [style=none] (10) at (1, -1.2) {};
		\node [style=none] (11) at (0, -1.2) {};
	\end{pgfonlayer}
	\begin{pgfonlayer}{edgelayer}
		\draw (9.center) to (5.center);
		\draw [in=90, out=270, looseness=1.25] (3.center) to (6.south);
		\draw (10) to (6);
		\draw (4) to (1.center);
		\draw [in=90, out=-90, looseness=1.25] (2.north) to (7.center);
		\draw (2) to (0);
		\draw (8) to (11.center);
	\end{pgfonlayer}\end{tikzpicture}\end{aligned}
  \]
  for $f \colon A \to X \otimes B$ and $g \colon A \to Y \otimes B$.
\end{corollary}
\begin{proof}
  By definition, $f \sim g$ in $\cat{C}$ means precisely that $[f]=[g]$ in $\CP(\cat{C})$.
  Via the isomorphism of Proposition~\ref{prop:generalizedCP}, elementary graphical manipulation then yields the statement.
\end{proof}

\begin{remark}\label{remark:composition}
  One might try, as in~\cite{coeckeheunen:qpl}, to define $\CP(\cat{C})$ as a monoidal subcategory of $\cat{C}$ as follows.
    \begin{align*}
      \CP(\cat{C})(A,B) \;&=\; \left\{\left.
          \vcenter{\hbox{\begin{tikzpicture}[scale=0.75]
	\begin{pgfonlayer}{nodelayer}
		\node [style=none] (0) at (1, 1.2) {};
		\node [style=none] (1) at (0, 1.2) {};
		\node [style=none] (2) at (1, 0.5) {};
		\node [style=none] (3) at (0.25, 0.5) {};
		\node [style=box] (4) at (0, 0.5) {$\;\;f^\dag\;\;$};
		\node [style=none] (5) at (-0.25, 0.5) {};
		\node [style=none] (6) at (1, -0.5) {};
		\node [style=none] (7) at (0.25, -0.5) {};
		\node [style=box] (8) at (0, -0.5) {$\;\;f\phantom{^\dag}\;\;$};
		\node [style=none] (9) at (-0.25, -0.5) {};
		\node [style=none] (10) at (1, -1.2) {};
		\node [style=none] (11) at (0, -1.2) {};
	\end{pgfonlayer}
	\begin{pgfonlayer}{edgelayer}
		\draw (9.center) to (5.center);
		\draw [in=90, out=270, looseness=1.25] (3.center) to (6.south);
		\draw (10) to (6);
		\draw (4) to (1.center);
		\draw [cross, in=90, out=-90, looseness=1.25] (2.north) to (7.center);
		\draw (2) to (0);
		\draw (8) to (11.center);
	\end{pgfonlayer}
      \end{tikzpicture}}}
      \;\;\right|\;\;
      \vcenter{\hbox{\begin{tikzpicture}[scale=0.75]
	\begin{pgfonlayer}{nodelayer}
		\node [style=none] (0) at (-1, 1) {};
		\node [style=none] (1) at (-0.5, 1) {};
		\node [style=none] (2) at (-1, 0) {};
		\node [style=box] (3) at (-0.75, 0) {$\;\;f\;\;$};
		\node [style=none] (4) at (-0.5, 0) {};
		\node [style=none] (5) at (-0.75, -1) {};
	\end{pgfonlayer}
	\begin{pgfonlayer}{edgelayer}
		\draw (3) to (5);
		\draw (1) to (4);
		\draw (2) to (0);
	\end{pgfonlayer}
      \end{tikzpicture}}}
      \in \cat{C}(A,C \tensor B) \right\}
   \\
    \left(
          \vcenter{\hbox{\begin{tikzpicture}[scale=0.75]
	\begin{pgfonlayer}{nodelayer}
		\node [style=none] (0) at (1, 1.2) {};
		\node [style=none] (1) at (0, 1.2) {};
		\node [style=none] (2) at (1, 0.5) {};
		\node [style=none] (3) at (0.25, 0.5) {};
		\node [style=box] (4) at (0, 0.5) {$\;\;g^\dag\;\;$};
		\node [style=none] (5) at (-0.25, 0.5) {};
		\node [style=none] (6) at (1, -0.5) {};
		\node [style=none] (7) at (0.25, -0.5) {};
		\node [style=box] (8) at (0, -0.5) {$\;\;g\phantom{^\dag}\;\;$};
		\node [style=none] (9) at (-0.25, -0.5) {};
		\node [style=none] (10) at (1, -1.2) {};
		\node [style=none] (11) at (0, -1.2) {};
	\end{pgfonlayer}
	\begin{pgfonlayer}{edgelayer}
		\draw (9.center) to (5.center);
		\draw [in=90, out=270, looseness=1.25] (3.center) to (6.south);
		\draw (10) to (6);
		\draw (4) to (1.center);
		\draw [cross, in=90, out=-90, looseness=1.25] (2.north) to (7.center);
		\draw (2) to (0);
		\draw (8) to (11.center);
	\end{pgfonlayer}
      \end{tikzpicture}}}
    \right)
    \;\after\;
    \left(
          \vcenter{\hbox{\begin{tikzpicture}[scale=0.75]
	\begin{pgfonlayer}{nodelayer}
		\node [style=none] (0) at (1, 1.2) {};
		\node [style=none] (1) at (0, 1.2) {};
		\node [style=none] (2) at (1, 0.5) {};
		\node [style=none] (3) at (0.25, 0.5) {};
		\node [style=box] (4) at (0, 0.5) {$\;\;f^\dag\;\;$};
		\node [style=none] (5) at (-0.25, 0.5) {};
		\node [style=none] (6) at (1, -0.5) {};
		\node [style=none] (7) at (0.25, -0.5) {};
		\node [style=box] (8) at (0, -0.5) {$\;\;f\phantom{^\dag}\;\;$};
		\node [style=none] (9) at (-0.25, -0.5) {};
		\node [style=none] (10) at (1, -1.2) {};
		\node [style=none] (11) at (0, -1.2) {};
	\end{pgfonlayer}
	\begin{pgfonlayer}{edgelayer}
		\draw (9.center) to (5.center);
		\draw [in=90, out=270, looseness=1.25] (3.center) to (6.south);
		\draw (10) to (6);
		\draw (4) to (1.center);
		\draw [cross, in=90, out=-90, looseness=1.25] (2.north) to (7.center);
		\draw (2) to (0);
		\draw (8) to (11.center);
	\end{pgfonlayer}
      \end{tikzpicture}}}
    \right)
    \;\;&=\;\;\vcenter{\hbox{
        \begin{tikzpicture}[scale=0.75]
	\begin{pgfonlayer}{nodelayer}
		\node [style=none] (0) at (1, 2.5) {};
		\node [style=none] (1) at (0, 2.5) {};
		\node [style=None] (2) at (0.75, 2.25) {};
		\node [style=None] (3) at (-1.25, 2.25) {};
		\node [style=none] (4) at (0.25, 1.75) {};
		\node [style=none] (5) at (-0.5, 1.75) {};
		\node [style=box] (6) at (0, 1.75) {$\;\;f^\dag\;\;$};
		\node [style=none] (7) at (0.25, 0.75) {};
		\node [style=box] (8) at (0, 0.75) {$\;\;g^\dag\;\;$};
		\node [style=none] (9) at (-0.25, 0.75) {};
		\node [style=none] (10) at (-1, 0.75) {};
		\node [style=none] (11) at (1, 0.5) {};
		\node [style=None] (12) at (0.75, 0.25) {};
		\node [style=None] (13) at (-1.25, 0.25) {};
		\node [style=None] (14) at (0.75, -0.25) {};
		\node [style=None] (15) at (-1.25, -0.25) {};
		\node [style=none] (16) at (1, -0.5) {};
		\node [style=none] (17) at (0.25, -0.75) {};
		\node [style=box] (18) at (0, -0.75) {$\;\;g\phantom{^\dag}\;\;$};
		\node [style=none] (19) at (-0.25, -0.75) {};
		\node [style=none] (20) at (-1, -0.75) {};
		\node [style=none] (21) at (0.25, -1.75) {};
		\node [style=none] (22) at (-0.5, -1.75) {};
		\node [style=box] (23) at (0, -1.75) {$\;\;f\phantom{^\dag}\;\;$};
		\node [style=None] (24) at (0.75, -2.25) {};
		\node [style=None] (25) at (-1.25, -2.25) {};
		\node [style=none] (26) at (1, -2.5) {};
		\node [style=none] (27) at (0, -2.5) {};
	\end{pgfonlayer}
	\begin{pgfonlayer}{edgelayer}
		\draw [dashed, gray] (13) to node{} (12);
		\draw [in=307, out=90] (10.south) to (5);
		\draw [in=270, out=90, looseness=1.25] (16.south) to (7);
		\draw (1) to (6);
		\draw (26) to (16);
		\draw (10) to (20);
		\draw [in=53, out=-90] (20.north) to (22);
		\draw [dashed, gray] (25) to node{} (24);
		\draw (17) to (21);
		\draw [cross, in=90, out=-90, looseness=1.25] (11.north) to (17);
		\draw (4) to (7);
		\draw (0) to (11);
		\draw [dashed, gray] (14) to node{} (15);
		\draw (23) to (27);
		\draw [dashed, gray] (15) to node{} (25);
		\draw [dashed, gray] (12) to node{} (2);
		\draw [dashed, gray] (2) to node{} (3);
		\draw [dashed, gray] (24) to node{} (14);
		\draw (9) to (19);
		\draw [dashed, gray] (3) to node{} (13);
	\end{pgfonlayer}
      \end{tikzpicture}    }}
   \end{align*}
   That is, the homset $\CP(\cat{C})(A,B)$ is a subset of $\cat{C}(A \otimes B, A \otimes B)$, and the equivalence relation $\sim$ is simply absorbed by the set comprehension notation.
   However, it is unclear that this composition is well-defined.
   Clearly the composition of morphisms $F \colon A \to B$ and $G \colon B \to C$ does not depend on the chosen representative $g \colon B \to Y \otimes C$ in $G=(g^\dag \otimes \id) (\id \otimes \sigma) (g \otimes \id)$. But the composition does in general depend on the choice of representative of $F$.

   The category $\cat{Hilb}$ has two properties that make this composition well-defined nevertheless. First, it is \emph{monoidally well-pointed}: if $f (a \times b)= g(a \otimes b)$ for all $a \colon I \to A$ and $b \colon I \to B$, then $f=g \colon A \otimes B \to C$~\cite{abramskyheunen:hstar}. Second, it is \emph{enriched} over Banach spaces: by choosing orthonormal bases, any morphism $f \colon A \to B$ can be written in `matrix form' as $f=\sum_i b_i a_i^\dag$ for $a_i \colon I \to A$ and $b_i \colon I \to B$, and this (possibly infinite) sum respects composition and tensor products.
   Using these properties one sees that $(\id \otimes b^\dag)GF(\id \otimes a)$ is independent of the chosen representative of $F$, and hence so is $GF$.
\end{remark}

We now embark on showing that the $\CP$--construction is not just formal manipulation of diagrams that just happens to coincide with the traditional setting in the case of finite-dimensional Hilbert spaces.

To do so, we first recall the definition of quantum operations in the
Heisenberg picture, as it is usually stated in infinite-dimensional
quantum information theory~\cite{holevo:statistical}. A function
$\varphi \colon A \to B$ between von Neumann algebras is \emph{unital}
when $\varphi(1)=1$, and \emph{positive} when for each $a \in A$ there is
$b \in B$ such that $\varphi(a^*a)=b^*b$. When it is linear, the
function $\varphi$ is called \emph{normal} when it preserves suprema
of increasing nets of projections, or equivalently, when it is
ultraweakly continuous. Finally,
$\varphi$ is \emph{completely positive} when $\varphi \tensor \id
\colon A \tensor M_n \to B \tensor M_n$ is positive for all $n \in
\mathbb{N}$, where $M_n$ is the von Neumann algebra of $n$-by-$n$
complex matrices. (See e.g.~\cite[3.3]{takesaki:operatoralgebras}
or~\cite[p26]{paulsen:completelypositive}.)  

\begin{definition}\label{def:cp}
  A \emph{quantum operation} is a normal completely positive linear
  map between von Neumann algebras.\footnote{Quantum operations $\varphi$ are usually also taken to be \emph{subunital}, that is, satisfying $\varphi(\id[H]) \leq \id[K]$. We do not require this for the same reason as the original CPM--construction did not~\cite[Remark~6.1]{selinger:dataflow}. See also the discussion after Theorem~\ref{thm:justification}.}\footnote{Not every completely
    positive linear map between von Neumann algebras is normal. In
    fact, a positive linear map is normal if and only if it is weak-*
    continuous~\cite[46.5]{conway:operatortheory}.} 
  Hilbert
  spaces $H$ and quantum operations $B(H) \to B(K)$ 
  form a category that we denote by $\Cat{QOperations}$. 
\end{definition}

To see that $\CP(\Cat{Hilb})$ is isomorphic to $\Cat{QOperations}$, we
will rely on two classical theorems: Stinespring's dilation theorem,
showing that quantum operations can be written as *-homomorphisms on
larger algebras, and Dixmier's structure theorem for normal
*-homomorphisms. A \emph{*-homomorphism} is a
linear map $\pi \colon A \to B$ between C*-algebras that satisfies
$\pi(ab)=\pi(a)\pi(b)$ and $\pi(a^*)=\pi(a)^*$.  

Let us emphasize that the following results hold for arbitrary Hilbert
spaces, not just separable ones.

\begin{theorem}[Stinespring]\label{thm:stinespring}
  Let $A$ be a von Neumann algebra. For any normal completely positive linear
  map $\varphi \colon A \to B(H)$, there exist a unital normal
  *-homomorphism $\pi \colon A \to B(K)$ and a continuous linear $v
  \colon H \to K$ such that $\varphi(a)=v^\dag \pi(a) v$.  
\end{theorem}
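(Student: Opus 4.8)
The plan is to prove this by the standard Gelfand–Naimark–Segal-style dilation, building the Hilbert space $K$ directly out of $\varphi$. First I would form the algebraic tensor product $A \odot H$ of $A$ with $H$ as a complex vector space, and equip it with the sesquilinear form determined by
\[
  \langle a \tensor \xi,\; b \tensor \eta \rangle \;=\; \langle \xi,\; \varphi(a^* b)\,\eta \rangle_H .
\]
The crucial point is that this form is positive semidefinite, and this is exactly where complete positivity enters. For $a_1,\dots,a_n \in A$ the matrix $[a_i^* a_j]_{ij}$ equals $C^\dag C$ in $M_n(A)$, where $C$ is the matrix with first row $(a_1,\dots,a_n)$ and zeros elsewhere, hence is positive; applying the positive map $\varphi \tensor \id \colon A \tensor M_n \to B(H) \tensor M_n$ gives $[\varphi(a_i^* a_j)]_{ij} \geq 0$ in $M_n(B(H)) = B(H^n)$, and pairing this positive operator with the vector $(\xi_1,\dots,\xi_n) \in H^n$ yields $\sum_{i,j}\langle \xi_i, \varphi(a_i^* a_j)\xi_j\rangle \geq 0$, which is precisely $\langle x, x\rangle \geq 0$ for $x = \sum_i a_i \tensor \xi_i$.

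Next I would quotient $A \odot H$ by the null space $N = \{x : \langle x,x\rangle = 0\}$, which is a subspace by the Cauchy–Schwarz inequality for the (now genuine) pre-inner product, and complete, obtaining a Hilbert space $K$. On the dense image of $A \odot H$ I define $\pi(a)[b \tensor \xi] = [ab \tensor \xi]$. Well-definedness on the quotient and the bound $\|\pi(a)\| \leq \|a\|$ both follow from the matrix inequality $[b_i^*\, a^* a\, b_j] \leq \|a\|^2\,[b_i^* b_j]$ in $M_n(A)$, itself a consequence of $a^* a \leq \|a\|^2\,1$ together with the same positive-matrix device (writing a positive $T$ as $s^\dag s$ shows $[b_i^* T b_j] \geq 0$); applying $\varphi \tensor \id$ and pairing then bounds $\|\pi(a) x\|^2$ by $\|a\|^2\|x\|^2$, so $\pi(a)$ preserves $N$ and extends to $K$. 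Multiplicativity $\pi(ab)=\pi(a)\pi(b)$ and unitality $\pi(1)=\id[K]$ are immediate on the dense subspace, and $\pi(a^\dag)=\pi(a)^\dag$ is the direct computation $\langle \pi(a^\dag)(b \tensor \xi), c \tensor \eta\rangle = \langle \xi, \varphi(b^\dag a c)\eta\rangle = \langle b \tensor \xi, \pi(a)(c \tensor \eta)\rangle$, so $\pi$ is a unital $*$-homomorphism.

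Finally, using that the von Neumann algebra $A$ is unital, I define $v \colon H \to K$ by $v\xi = [1 \tensor \xi]$; it is continuous since $\|v\xi\|^2 = \langle \xi, \varphi(1)\xi\rangle \leq \|\varphi(1)\|\,\|\xi\|^2$. The reconstruction identity is then a one-line check: for $\xi,\eta \in H$,
\[
  \langle \eta,\; v^\dag \pi(a)\, v\, \xi\rangle = \langle v\eta,\; \pi(a)\, v\xi\rangle = \langle 1 \tensor \eta,\; a \tensor \xi\rangle = \langle \eta,\; \varphi(a)\,\xi\rangle ,
\]
whence $\varphi(a) = v^\dag \pi(a) v$. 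I expect the only genuine obstacle to be the positivity and boundedness estimates of the first two paragraphs: everything turns on rephrasing complete positivity as positivity of the ampliations $\varphi \tensor \id$ applied to the concrete positive matrices $[a_i^* a_j]$ and $\|a\|^2[b_i^* b_j] - [b_i^* a^\dag a\, b_j]$. The quotient–completion step and the verification of the algebraic identities for $\pi$ are then routine. Note that normality of $\varphi$ is not used anywhere, consistent with the statement assuming only complete positivity.
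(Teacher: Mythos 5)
Your proposal is correct and is precisely the classical GNS-style dilation argument; the paper offers no proof of its own here, instead citing~\cite{stinespring:positive} and~\cite[4.1]{paulsen:completelypositive}, and your construction (the semi-inner product on $A \odot H$, positivity via $[\varphi(a_i^*a_j)] \geq 0$, the bound $[b_i^* a^* a\, b_j] \leq \|a\|^2 [b_i^* b_j]$, quotient--completion, and $v\xi = [1 \otimes \xi]$) is exactly the argument those references give. Your closing observation that normality of $\varphi$ is nowhere needed is also accurate, matching the statement's hypotheses.
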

\begin{proof}
  See~\cite[Theorem~III.2.2.4]{blackadar:operatoralgebra}.
\end{proof}

\begin{theorem}[Dixmier]\label{thm:dixmier}
  Every normal *-homomorphism $\varphi \colon B(H) \to B(K)$
  factors as $\varphi = \varphi_3 \varphi_2 \varphi_1$ for:
  \begin{itemize}
  \item an \emph{ampliation} $\varphi_1 \colon B(H) \to B(H \tensor H') \colon f \mapsto f
    \tensor \id[H']$ for some Hilbert space $H'$;
  \item an \emph{induction} $\varphi_2 \colon B(H \tensor H') \to B(K') \colon f \mapsto p
     f p$ for the projection $p \in B(H \tensor H')$ onto some Hilbert subspace
    $K' \subseteq H \tensor H'$;
  \item a \emph{spatial isomorphism} $\varphi_3 \colon B(K') \to B(K) \colon f \mapsto
    u^\dag f u$ for a unitary $u \colon K \to K'$.
  \end{itemize}
\end{theorem}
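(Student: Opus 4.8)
The plan is to reduce the statement to the representation theory of the type~I factor $B(H)$: I will show that any normal (unital) $*$-homomorphism $\varphi \colon B(H) \to B(K)$ is unitarily equivalent to an ampliation, and then read off the three displayed maps as, respectively, the ampliation itself, the restriction to the subspace actually carrying the representation, and the unitary implementing the equivalence. A non-unital $\varphi$ reduces to the unital case by corestricting to $B\big(\varphi(\id[H])K\big)$, so I may assume $\varphi(\id[H]) = \id[K]$. Fixing an orthonormal basis $(\xi_i)_{i \in I}$ of $H$, I would work throughout with the matrix units $e_{ij} \in B(H)$, the rank-one operators $\eta \mapsto \langle \xi_j, \eta\rangle\,\xi_i$, which satisfy $e_{ij}^\dag = e_{ji}$, $e_{ij}e_{kl} = \delta_{jk}e_{il}$, and $\sum_{i} e_{ii} = \id[H]$ with the sum converging ultraweakly.

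Applying $\varphi$ transports these relations verbatim: the operators $p_{ij} := \varphi(e_{ij})$ form a system of matrix units in $B(K)$, so the $p_{ii}$ are mutually orthogonal projections and each $p_{i0}$ is a partial isometry identifying the range of $p_{00}$ with that of $p_{ii}$. Here normality does the essential work: it lets me conclude $\sum_i p_{ii} = \varphi(\id[H]) = \id[K]$, since $\varphi$ preserves the supremum of the increasing net of finite partial sums of the $e_{ii}$. I would then take the multiplicity space to be $H' := p_{00}K = \varphi(e_{00})K$ and define $u \colon H \tensor H' \to K$ on elementary tensors by $u(\xi_i \tensor \zeta) = p_{i0}\zeta$. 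The matrix-unit relations make $u$ isometric, and the identity $\sum_i p_{ii} = \id[K]$ forces $K = \bigoplus_i p_{ii}K = \bigoplus_i p_{i0}H'$, so $u$ is onto; hence $u$ is unitary.

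A short computation on matrix units and elementary tensors, using $p_{kl}p_{i0} = \delta_{li}p_{k0}$, then yields the intertwining law $\varphi(f)\,u = u\,(f \tensor \id[H'])$ for every $f \in B(H)$, that is, $\varphi(f) = u\,(f \tensor \id[H'])\,u^\dag$. This is exactly the desired factorization: $\varphi_1$ is the ampliation $f \mapsto f \tensor \id[H']$; the representing subspace $K' \subseteq H \tensor H'$ is the invariant subspace on which the ampliation is carried (all of $H \tensor H'$ in the unital case, a subspace $H \tensor H''$ cut out by a projection in the commutant $\id[H] \tensor B(H')$ in general), giving the induction $\varphi_2 \colon f \mapsto pfp$; and $\varphi_3 \colon g \mapsto u^\dag g u$ along the unitary $u$ (corestricted to $K'$) is the spatial isomorphism. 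Multiplicativity of the composite is inherited from that of $\varphi$ restricted to the image of $\varphi_1$, which is all that the induction step needs to respect.

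The main obstacle is entirely in the passage to arbitrary dimension: the sum $\sum_i e_{ii} = \id[H]$ is genuinely infinite, and both the well-definedness and the surjectivity of $u$ hinge on pushing this supremum of projections through $\varphi$, which is legitimate precisely because $\varphi$ is normal. Away from this point the argument is purely algebraic manipulation of matrix units and holds for Hilbert spaces of any cardinality; so I would invest the care in the ultraweak convergence of $\sum_i p_{ii}$ and in checking that $u$ extends from finite tensors to a bounded operator on the completed tensor product $H \tensor H'$.
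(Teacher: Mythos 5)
The paper does not actually prove this theorem: its ``proof'' is a pointer to Dixmier~I.4.3 and Takesaki~IV.5.5. Your proposal supplies the standard argument that lives behind those citations, and it is correct in its main line: transport the matrix units $e_{ij}$ through $\varphi$ to get a system of matrix units $p_{ij} = \varphi(e_{ij})$ in $B(K)$, use normality to push the ultraweak supremum $\sum_i e_{ii} = \id[H]$ through $\varphi$ and obtain $\sum_i p_{ii} = \id[K]$, take the multiplicity space $H' = p_{00}K$, and check that $u(\xi_i \tensor \zeta) = p_{i0}\zeta$ is a well-defined unitary intertwining $\varphi(f)$ with $f \tensor \id[H']$. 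Your identification of where normality is genuinely needed --- the convergence of $\sum_i p_{ii}$, hence the surjectivity of $u$ --- is exactly the point that makes the theorem dimension-free, and it is the only analytically nontrivial step; the rest is matrix-unit algebra, as you say. (One small point of care in the intertwining computation: $fe_{i0} = \sum_k f_{ki}e_{k0}$ converges only ultraweakly, so you again invoke normality, i.e.\ ultraweak continuity of $\varphi$, not just preservation of suprema of projections; the partial sums are uniformly bounded by $\|f\|$, so this is fine.)

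Your handling of the non-unital case, however, does not work and should be dropped. The factorization as stated \emph{forces} unitality: $\varphi_2(\id[H \tensor H']) = \id[K']$ and conjugation by the unitary $u$ preserves identities, so $\varphi_3\varphi_2\varphi_1(\id[H]) = \id[K]$ no matter how $p$ and $u$ are chosen. Consequently no choice of ``projection in the commutant $\id[H] \tensor B(H')$'' can produce a non-unital composite, and your corestriction to $B(\varphi(\id[H])K)$ changes the codomain, so it does not yield a factorization of the original map into $B(K)$ of the stated shape. The theorem, like its sources, implicitly concerns unital (non-degenerate) normal $*$-homomorphisms --- which is also the only case the paper uses, since in Corollary~\ref{cor:quantumchannel} it is applied to the unital Stinespring representation $\pi$. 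Note also that for unital $\varphi$ on the full factor $B(H)$ your argument shows the induction is superfluous ($p = \id[]$, $K' = H \tensor H'$ works), which is consistent with the theorem: the projection is only needed for the general von Neumann algebras treated in Dixmier and Takesaki.
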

\begin{proof}
  See~\cite[I.4.3]{dixmier:vonneumann}
  or~\cite[IV.5.5]{takesaki:operatoralgebras}.
\end{proof}

\begin{corollary}\label{cor:quantumchannel}
  A linear map $\varphi \colon B(H) \to B(K)$ is a quantum operation
  if and only if it is of the form $\varphi(f)=g^\dag  (f
  \tensor \id) g$ for some Hilbert space $H'$ and continuous
  linear map $g \colon K \to H \tensor H'$. 
\end{corollary}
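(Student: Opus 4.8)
The statement is a biconditional, so I would prove the two implications separately, treating the ``if'' direction as a warm-up and spending the real effort on the ``only if'' direction.

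For the easy direction, assume $\varphi(f) = g^\dag(f \tensor \id[H'])g$ for some continuous linear $g \colon K \to H \tensor H'$. I would factor $\varphi$ as the ampliation $f \mapsto f \tensor \id[H']$ followed by the conjugation $a \mapsto g^\dag a g$. The ampliation is a unital $*$-homomorphism, hence completely positive, and it is normal. The conjugation $a \mapsto g^\dag a g$ is completely positive, since tensoring it with $\id[M_n]$ merely replaces $g$ by $g \tensor \id[M_n]$ and a conjugation always sends a positive element $b^*b$ to $(bg)^*(bg)$; and it is ultraweakly continuous, hence normal. A composite of normal completely positive maps is again normal and completely positive, so $\varphi$ is a quantum operation.

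For the converse, start from a quantum operation $\varphi \colon B(H) \to B(K)$ and apply Theorem~\ref{thm:stinespring} to obtain a unital $*$-homomorphism $\pi \colon B(H) \to B(L)$ and a continuous linear $v \colon K \to L$ with $\varphi(f) = v^\dag \pi(f) v$. The plan is then to feed $\pi$ into Dixmier's Theorem~\ref{thm:dixmier}, writing $\pi(f) = u^\dag w^\dag (f \tensor \id[H']) w u$, where $w \colon K' \to H \tensor H'$ is the isometric inclusion realizing the induction (so that the projection is $p = w w^\dag$) and $u \colon L \to K'$ is the unitary of the spatial isomorphism. Collapsing the isometries by setting $g = w u v \colon K \to H \tensor H'$ then yields $\varphi(f) = v^\dag u^\dag w^\dag (f \tensor \id[H']) w u v = g^\dag (f \tensor \id[H']) g$, as desired; the only routine checks that remain are that $wu$ is a well-defined continuous linear map and that the adjoints compose as claimed.

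The one genuine gap, and the step I expect to be the main obstacle, is that Dixmier's theorem requires $\pi$ to be \emph{normal}, whereas Stinespring's theorem as stated carries no such guarantee. I would close this gap by taking $\pi$ to be the \emph{minimal} dilation, so that $L$ is the closed linear span of the vectors $\pi(a)v\xi$ with $a \in B(H)$ and $\xi \in K$, and then showing that normality of $\varphi$ forces normality of this minimal $\pi$. Concretely, for an increasing net of projections $a_i \uparrow a$ in $B(H)$ one computes, using that $\pi$ is a unital $*$-homomorphism and $v^\dag \pi(\blank) v = \varphi$, that $\langle \pi(b)v\xi,\ \pi(a_i)\pi(c)v\eta\rangle = \langle \xi,\ \varphi(b^* a_i c)\eta\rangle$. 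Since $a_i \to a$ ultraweakly and left and right multiplication by fixed bounded operators are ultraweakly continuous, $b^* a_i c \to b^* a c$ ultraweakly, and normality (ultraweak continuity) of $\varphi$ gives convergence of these inner products to $\langle \xi, \varphi(b^* a c)\eta\rangle$. As the vectors $\pi(c)v\eta$ are total in $L$ and the net $\pi(a_i)$ is increasing and bounded above by $\pi(1)=1$, it follows that $\pi(a_i) \uparrow \pi(a)$, i.e.\ $\pi$ is normal. With normality of $\pi$ secured, Dixmier applies and the assembly above completes the proof.
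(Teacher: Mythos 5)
Your proposal is correct and follows essentially the same route as the paper: the ``if'' direction is the direct verification via Definition~\ref{def:cp} that the paper leaves to the reader (your ampliation-plus-conjugation factorization fills it in), and the ``only if'' direction is the paper's combination of Theorems~\ref{thm:stinespring} and~\ref{thm:dixmier} with $g = puv$, which is your $g = wuv$ written with the isometry $w$ instead of the projection $p$. The normality gap you flag is genuine relative to how the paper \emph{states} Stinespring---Theorem~\ref{thm:dixmier} requires a normal $*$-homomorphism and Theorem~\ref{thm:stinespring} as printed does not supply one---and your minimal-dilation argument is exactly the standard repair; the paper silently relies on the stronger form of Stinespring in its cited sources (e.g.\ Takesaki IV.3.6(ii)), which asserts that $\pi$ may be taken normal when $\varphi$ is, so your extra lemma makes explicit what the paper's citation absorbs.
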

\begin{proof}
  Given a quantum operation $\varphi$, combine Theorems~\ref{thm:stinespring}
  and~\ref{thm:dixmier} to get $\varphi(f)=v^\dag u^\dag p (f \tensor \id)
  p u v$; taking $g=p u v$ brings $\varphi$ into the required
  form. 

  Conversely, one easily checks directly via Definition~\ref{def:cp} that a map $\varphi$ of
  the given form is indeed normal, and completely positive.
\end{proof}

\begin{theorem}\label{thm:justification}
  $\CP(\Cat{Hilb})$ and $\Cat{QOperations}$ are contravariantly isomorphic symmetric monoidal categories.
\end{theorem}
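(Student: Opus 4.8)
The plan is to exhibit an explicit, identity-on-objects functor $F \colon \CP(\Cat{Hilb}) \to \Cat{QOperations}$ and to prove it is an isomorphism respecting the monoidal structure. On morphisms I would read $F$ straight off the defining data of the $\CP$-construction: a morphism $A \to B$ of $\CP(\Cat{Hilb})$ is a Hilbert-space map $(f^\dag \tensor \id)(\id \tensor \swapmor)(f \tensor \id)$ built from a Kraus morphism $f \colon A \to B \tensor C$, and I send it to the quantum operation $x \mapsto f^\dag(x \tensor \id)f \colon B(B) \to B(A)$, the Heisenberg-picture map of the channel $A \to B$. This is exactly the normal form of Corollary~\ref{cor:quantumchannel}, whose forward direction — powered by the Stinespring and Dixmier theorems~\ref{thm:stinespring} and~\ref{thm:dixmier} — guarantees that \emph{every} quantum operation arises this way, giving surjectivity on hom-sets, while its converse guarantees that each such map really is normal and completely positive, so that $F$ lands in $\Cat{QOperations}$.

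The delicate point is that $F$ is well defined and injective on each hom-set, since a $\CP$-morphism is an honest morphism of $\Cat{Hilb}$ whereas a quantum operation is a map between operator algebras, and either is represented by a highly non-unique Kraus morphism. I would settle both at once with a single bookkeeping identity: for all vectors one computes $\langle \beta' \tensor a',\, \Phi(\beta \tensor a)\rangle = \langle a',\, \varphi(|\beta\rangle\langle\beta'|)\,a\rangle$, where $\Phi$ is the $\CP$-morphism and $\varphi$ its image under $F$. This presents $\varphi$ as a function of $\Phi$ alone, and conversely shows that the matrix elements of $\Phi$ are recovered from the values of $\varphi$ on rank-one operators; hence $\Phi = \Phi'$ if and only if $\varphi$ and $\varphi'$ agree on all rank-one, and so on all finite-rank, operators. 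Passing from finite-rank operators to all of $B(B)$ is where the arbitrary-dimensional setting genuinely bites: it uses \emph{normality} (Definition~\ref{def:cp}), since a normal map is weak-$*$ continuous and the finite-rank operators are weak-$*$ dense. I expect this to be the main obstacle, as the finite-dimensional Choi-matrix intuition must here be replaced by a continuity-and-density argument, with attention to convergence and boundedness of the sums involved.

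Functoriality is then a diagram chase through this component description. The identity $A \to A$ has ancillary system $C = I$ and Kraus morphism $\id[A]$, and $F$ sends it to $x \mapsto \id[A]^\dag\, x\, \id[A] = x$, the identity quantum operation. For composition I would feed the composition diagram of the $\CP$-construction through the Kraus picture: the components of the composite are the products of the components of the two factors, and the resulting map $x \mapsto \sum_{k,l}(G_l F_k)^\dag x (G_l F_k)$ is exactly the composite of the two quantum operations, so $F$ preserves composition.

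Finally, for the symmetric monoidal structure I would invoke Proposition~\ref{prop:tensorinCP} and match the tensor product of $\CP$-morphisms with the spatial tensor product of quantum operations, under the standard identification of $B(A \tensor A')$ with the von Neumann tensor product of $B(A)$ and $B(A')$ on the Hilbert-space tensor product; the components of the tensor $\CP$-morphism are the $F_k \tensor F'_{k'}$, whose induced operation is the tensor of the two factors. It then remains to check that $F$ carries the unit, the associativity and unit constraints, and — crucially — the symmetry of $\CP(\Cat{Hilb})$ to those of $\Cat{QOperations}$, which is routine once the hom-set bijection is established. Combining invertibility on objects and morphisms with preservation of composition and of the monoidal data yields the claimed isomorphism of symmetric monoidal categories.
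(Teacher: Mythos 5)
Your proposal is correct and follows essentially the same route as the paper: the paper's (two-sentence) proof exhibits exactly your functor, sending a $\CP(\Cat{Hilb})$-morphism with Kraus morphism $g$ to the quantum operation $g^\dag(\blank \tensor \id)g$, with surjectivity and well-typedness supplied by Corollary~\ref{cor:quantumchannel} via Stinespring and Dixmier. The injectivity argument through matrix elements on rank-one operators, normality, and weak-$*$ density, as well as the functoriality and monoidal checks, are details the paper leaves implicit, and you have filled them in correctly.
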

\begin{proof}
  The objects are already equal. The isomorphism sends a
  morphism $H \to K$ of $\CP(\Cat{Hilb})$ with Kraus morphism $g\colon H \to X \otimes K$ to the
  quantum operation $g^\dag (\id \tensor \blank) g \colon B(K) \to B(H)$. 

  This prescription is clearly (contravariantly) functorial and preserves tensor products.
  We show that it is well-defined. Suppose that $g \sim h$. Then by Definition~\ref{def:equivalence}, $g^\dag(\id \tensor f^\dag f)g=h^\dag(\id \tensor f^\dag f)h$; that is, the functions $g^\dag(\id \tensor \blank)g$ and $h^\dag(\id \tensor\blank)h$ are equal on positive elements $f^\dag f$ of $B(K)$. But then these linear maps coincide on all of $B(K)$ because that C*-algebra is spanned by its positive elements.

  Finally, this monoidal functor is invertible because of Theorem~\ref{thm:stinespring}.
\end{proof}

We end this section by discussing \emph{quantum channels}, that are usually defined as unital quantum operations; preserving units is the equivalent in the Schr{\"o}dinger picture of preserving traces in the Heisenberg picture. If $\varphi(1)=1$ in Stinespring's Theorem~\ref{thm:stinespring}, then $v^\dag v=1$, \textit{i.e.}\ $v$ is an isometry. This holds in the abstract, too.
A morphism in $\CPM(\cat{C})$ has this property when
\[\begin{aligned}\begin{tikzpicture}[scale=0.75]
	\begin{pgfonlayer}{nodelayer}
		\node [style=none] (0) at (-1, 1) {};
		\node [style=none] (1) at (1, 1) {};
		\node [style=none] (2) at (-1, 0) {};
		\node [style=box] (3) at (-0.75, 0) {$\;\;f_*\;\;$};
		\node [style=none] (4) at (-0.5, 0) {};
		\node [style=none] (5) at (0.5, 0) {e};
		\node [style=box] (6) at (0.75, 0) {$\;\;\;f\;\;$};
		\node [style=none] (7) at (1, 0) {};
		\node [style=none] (8) at (-0.75, -1) {};
		\node [style=none] (9) at (0.75, -1) {};
	\end{pgfonlayer}
	\begin{pgfonlayer}{edgelayer}
		\draw [arrow] (3) to (8);
		\draw [arrow, bend right=90, looseness=2.25] (5) to (4);
		\draw [arrow, markat=0.66] (9) to (6);
		\draw [arrow, bend right=90, looseness=2, markat=0.5] (7) to (2);
	\end{pgfonlayer}
  \end{tikzpicture}\end{aligned}
  \quad = \quad
  \begin{aligned}\begin{tikzpicture}[scale=0.75]
	\begin{pgfonlayer}{nodelayer}
		\node [style=none] (8) at (-0.75, -1) {};
		\node [style=none] (9) at (0.75, -1) {};
	\end{pgfonlayer}
	\begin{pgfonlayer}{edgelayer}
		\draw [arrow, bend right=90, looseness=2.25] (9) to (8);
	\end{pgfonlayer}
  \end{tikzpicture}\end{aligned}
\]
Via the analogy of equation~\eqref{eq:idea}, this means for morphisms in $\CP(\cat{C})$ that \emph{any} Kraus morphism is an isometry. This is indeed independent of the Kraus morphism, for if $f \sim g$ for $f \colon A \to X \otimes B$ and $g \colon A \to Y \otimes B$, then $f^\dag f = g^\dag g$ by applying Definition~\ref{def:equivalence} to $h=\rho_B\colon B \otimes I \to B$.

 


\section{Environment structures}
\label{sec:environment}

This section investigates when a given category is of the form
$\CP(\cat{C})$ by generalizing~\cite{coecke:selinger,coeckeperdrix:channels}.

\begin{definition}\label{def:environment}
  An \emph{environment structure} for a dagger monoidal category $\cat{C}$ is a monoidal category $\cat{\widehat{C}}$ with the same objects, together with a strict monoidal functor $F \colon \cat{C} \to \cat{\widehat{C}}$ with $F(A)=A$, and for each object $A$ a morphism $\top_A \colon A \to I$ in $\cat{\widehat{C}}$, depicted as
  $\vcenter{\hbox{\begin{tikzpicture}[scale=0.5]
	\begin{pgfonlayer}{nodelayer}
                \ground{0}{0,0.75}
		\node [style=none] (1) at (0, 0) {};
	\end{pgfonlayer}
	\begin{pgfonlayer}{edgelayer}
		\draw (1) to (0);
	\end{pgfonlayer}
  \end{tikzpicture}}}$, satisfying: 
  \begin{enumerate}
  \item[(a)] We have $\top_I=\id[I]$, and for all objects $A$ and $B$: $\quad
    \vcenter{\hbox{\begin{tikzpicture}[scale=0.75]
	\begin{pgfonlayer}{nodelayer}
                \ground{0}{-0.4,0.75}
		\node [style=none] (1) at (-0.4, 0) {$A$};
                \ground{2}{0.4,0.75}
		\node [style=none] (3) at (0.4, 0) {$B$};
	\end{pgfonlayer}
	\begin{pgfonlayer}{edgelayer}
                \draw (1) to (0);
                \draw (3) to (2);
	\end{pgfonlayer}
      \end{tikzpicture}}} 
   \; = \;
      \vcenter{\hbox{\begin{tikzpicture}[scale=0.75]
	\begin{pgfonlayer}{nodelayer}
                \ground{0}{0,0.75}
		\node [style=none] (1) at (0, 0) {$A \tensor B$};
	\end{pgfonlayer}
	\begin{pgfonlayer}{edgelayer}
		\draw (1) to (0);
	\end{pgfonlayer}
      \end{tikzpicture}}}
  \;\;$ in $\widehat{\cat{C}}$;
\item[(b)] 
	Maps $f \colon A \to X \otimes B$ and $g \colon A \to Y \otimes B$ in $\cat{C}$ are equivalent if and only if
      $\vcenter{\hbox{\begin{tikzpicture}[scale=0.75]
	\begin{pgfonlayer}{nodelayer}
		\node [style=none] (0) at (0.25, 1) {};
		\ground{1}{-0.25, 0.75};
		\node [style=none] (2) at (0.25, 0) {};
		\node [style=box] (3) at (0, 0) {$\;\;Ff\;\;$};
		\node [style=none] (4) at (-0.25, 0) {};
		\node [style=none] (5) at (0, -0.75) {};
	\end{pgfonlayer}
	\begin{pgfonlayer}{edgelayer}
		\draw (5.center) to (3);
		\draw (2.center) to (0.center);
		\draw (1) to (4.center);
	\end{pgfonlayer}
      \end{tikzpicture}}} \;=\;
      \vcenter{\hbox{\begin{tikzpicture}[scale=0.75]
	\begin{pgfonlayer}{nodelayer}
		\node [style=none] (0) at (0.25, 1) {};
		\ground{1}{-0.25, 0.75};
		\node [style=none] (2) at (0.25, 0) {};
		\node [style=box] (3) at (0, 0) {$\;\;Fg\vphantom{f}\;\;$};
		\node [style=none] (4) at (-0.25, 0) {};
		\node [style=none] (5) at (0, -0.75) {};
	\end{pgfonlayer}
	\begin{pgfonlayer}{edgelayer}
		\draw (5.center) to (3);
		\draw (2.center) to (0.center);
		\draw (1) to (4.center);
	\end{pgfonlayer}
      \end{tikzpicture}}}
 \;\;$ in $\widehat{\cat{C}}$;
\item[(c)] For each $\widehat{f} \in \cat{\widehat{C}}(A,B)$ there is
  $f \in \cat{C}(A,X \tensor B)$ such that $\;\;
  \vcenter{\hbox{\begin{tikzpicture}[scale=0.75]
	\begin{pgfonlayer}{nodelayer}
		\node [style=none] (0) at (0, 1) {};
	    \node [style=box] (1) at (0, 0.125) {$\;\;\widehat{f}\;\;$};
		\node [style=none] (2) at (0, -0.75) {};
	\end{pgfonlayer}
	\begin{pgfonlayer}{edgelayer}
		\draw (0.center) to (1);
		\draw (2.center) to (1);
	\end{pgfonlayer}
      \end{tikzpicture}}}
      \; = \;
      \vcenter{\hbox{\begin{tikzpicture}[scale=0.75]
	\begin{pgfonlayer}{nodelayer}
		\node [style=none] (0) at (0.25, 1) {};
		\ground{1}{-0.25, 0.75};
		\node [style=none] (2) at (0.25, 0) {};
		\node [style=box] (3) at (0, 0) {$\;\;Ff\;\;$};
		\node [style=none] (4) at (-0.25, 0) {};
		\node [style=none] (5) at (0, -0.75) {};
	\end{pgfonlayer}
	\begin{pgfonlayer}{edgelayer}
		\draw (5.center) to (3);
		\draw (2.center) to (0.center);
		\draw (1) to (4.center);
	\end{pgfonlayer}
      \end{tikzpicture}}} 
  \;\;$ in $\widehat{\cat{C}}$. 
  \end{enumerate}
  If the functor $F$ is faithful, we call the environment structure \emph{faithful}.
\end{definition}

As Theorem~\ref{thm:environmentnecessary} below makes precise, the category $\cat{Hilb}$ has an environment structure, but the functor $F \colon \cat{Hilb} \to \CP(\cat{Hilb})$ identifies global phases and is therefore not faithful.

Intuitively, if we think of the category $\cat{C}$ as consisting of
pure states, then the supercategory $\cat{\widehat{C}}$ consists of
mixed states. The maps $\top$ `ground' a system within the
environment. Condition (c) then reads that every mixed state can be
seen as a pure state in an extended system; the lack of knowledge
carried in the ancillary system represents the variables
relative to which we mix. 

Having an environment structure is a sufficient condition for $\cat{\widehat{C}}$ to be of the form $\CP(\cat{C})$.

\begin{theorem}\label{thm:environmentCP}
  If a dagger braided monoidal category $\cat{C}$ comes with an
  environment structure, then there exists an isomorphism 
  $\CP(\cat{C}) \to \cat{\widehat{C}}$ of monoidal categories.
\end{theorem}
\begin{proof}
  Define $\xi \colon \CP(\cat{C}) \to \cat{\widehat{C}}$ by setting $\xi(A)=A$ on objects, and
  \[
    \xi \left( \left[
    \begin{aligned}\begin{tikzpicture}[scale=0.5]
	\begin{pgfonlayer}{nodelayer}
	  \node [box] (f) at (0,0) {$\;\;f\;\;$};
	\end{pgfonlayer}
	\begin{pgfonlayer}{edgelayer}
	  \draw (-.4,0) to (-.4,1);
	  \draw (.4,0) to (.4,1);
	  \draw (0,0) to (0,-1);
	\end{pgfonlayer}
    \end{tikzpicture}\end{aligned}\right]
    \right) = 
      \vcenter{\hbox{\begin{tikzpicture}[scale=0.75]
	\begin{pgfonlayer}{nodelayer}
		\node [style=none] (0) at (0.25, 1) {};
		\ground{1}{-0.25, 0.75};
		\node [style=none] (2) at (0.25, 0) {};
		\node [style=box] (3) at (0, 0) {$\;\;Ff\;\;$};
		\node [style=none] (4) at (-0.25, 0) {};
		\node [style=none] (5) at (0, -0.75) {};
	\end{pgfonlayer}
	\begin{pgfonlayer}{edgelayer}
		\draw (5.center) to (3);
		\draw (2.center) to (0.center);
		\draw (1) to (4.center);
	\end{pgfonlayer}
      \end{tikzpicture}}} 
  \]
  on morphisms. This map is well-defined and injective by
  Definition~\ref{def:environment}(b), and is surjective by
  Definition~\ref{def:environment}(c). It preserves composition and
  monoidal structure
  \begin{align*}
  & \xi \left( 
    \left[
    \begin{aligned}\begin{tikzpicture}[scale=0.5]
	\begin{pgfonlayer}{nodelayer}
	  \node [box] (g) at (0,0) {$\;\;g\;\;$};
	\end{pgfonlayer}
	\begin{pgfonlayer}{edgelayer}
	  \draw (-.4,0) to (-.4,1);
	  \draw (.4,0) to (.4,1);
	  \draw (0,0) to (0,-1);
	\end{pgfonlayer}
    \end{tikzpicture}\end{aligned}\right]
    \after
 \left[
    \begin{aligned}\begin{tikzpicture}[scale=0.5]
	\begin{pgfonlayer}{nodelayer}
	  \node [box] (f) at (0,0) {$\;\;f\;\;$};
	\end{pgfonlayer}
	\begin{pgfonlayer}{edgelayer}
	  \draw (-.4,0) to (-.4,1);
	  \draw (.4,0) to (.4,1);
	  \draw (0,0) to (0,-1);
	\end{pgfonlayer}
    \end{tikzpicture}\end{aligned}\right]
    \right)
    \; = \;
    \vcenter{\hbox{\begin{tikzpicture}[scale=0.75]
	\begin{pgfonlayer}{nodelayer}
		\node [style=none] (0) at (1, 1.75) {};
		\ground{1}{0, 1.5};
		\node [style=none] (2) at (1, 0.75) {};
		\node [style=box] (3) at (0.5, 0.75) {$\;\;\;Fg\;\;\;$};
		\node [style=none] (4) at (0, 0.75) {};
		\ground{5}{-0.5, 0.25};
		\node [style=none] (6) at (0.5, -0.5) {};
		\node [style=box] (7) at (0, -0.5) {$\;\;\;Ff\;\;\;$};
		\node [style=none] (8) at (-0.5, -0.5) {};
		\node [style=none] (9) at (0, -1.25) {};
	\end{pgfonlayer}
	\begin{pgfonlayer}{edgelayer}
		\draw (2.center) to (0.center);
		\draw (1) to (4.center);
		\draw (9.center) to (7);
		\draw [in=90, out=270, looseness=0.75] (3) to (6.center);
		\draw (5) to (8.center);
	\end{pgfonlayer}
      \end{tikzpicture}}}
    \; = \;
    \xi \left( 
    \left[
    \begin{aligned}\begin{tikzpicture}[scale=0.5]
	\begin{pgfonlayer}{nodelayer}
	  \node [box] (g) at (0,0) {$\;\;g\;\;$};
	\end{pgfonlayer}
	\begin{pgfonlayer}{edgelayer}
	  \draw (-.4,0) to (-.4,1);
	  \draw (.4,0) to (.4,1);
	  \draw (0,0) to (0,-1);
	\end{pgfonlayer}
    \end{tikzpicture}\end{aligned}\right]
    \right) \after \xi \left(
 \left[
    \begin{aligned}\begin{tikzpicture}[scale=0.5]
	\begin{pgfonlayer}{nodelayer}
	  \node [box] (f) at (0,0) {$\;\;f\;\;$};
	\end{pgfonlayer}
	\begin{pgfonlayer}{edgelayer}
	  \draw (-.4,0) to (-.4,1);
	  \draw (.4,0) to (.4,1);
	  \draw (0,0) to (0,-1);
	\end{pgfonlayer}
    \end{tikzpicture}\end{aligned}\right] 
    \right)
   \\
   & \xi \left( 
 \left[
    \begin{aligned}\begin{tikzpicture}[scale=0.5]
	\begin{pgfonlayer}{nodelayer}
	  \node [box] (f) at (0,0) {$\;\;f\;\;$};
	\end{pgfonlayer}
	\begin{pgfonlayer}{edgelayer}
	  \draw (-.4,0) to (-.4,1);
	  \draw (.4,0) to (.4,1);
	  \draw (0,0) to (0,-1);
	\end{pgfonlayer}
    \end{tikzpicture}\end{aligned}\right]
   \tensor
 \left[
    \begin{aligned}\begin{tikzpicture}[scale=0.5]
	\begin{pgfonlayer}{nodelayer}
	  \node [box] (g) at (0,0) {$\;\;g\;\;$};
	\end{pgfonlayer}
	\begin{pgfonlayer}{edgelayer}
	  \draw (-.4,0) to (-.4,1);
	  \draw (.4,0) to (.4,1);
	  \draw (0,0) to (0,-1);
	\end{pgfonlayer}
    \end{tikzpicture}\end{aligned}\right]
   \right)
    \; = \;
    \vcenter{\hbox{\begin{tikzpicture}[scale=0.75]
	\begin{pgfonlayer}{nodelayer}
		\node [style=none] (0) at (1.25, 1) {};
		\node [style=none] (1) at (-0.25, 1) {};
		\ground{2}{0.75, 0.75};
		\ground{3}{-0.75, 0.75};
		\node [style=none] (4) at (1.25, 0) {};
		\node [style=box] (5) at (1, 0) {$\;\;Fg\vphantom{f}\;\;$};
		\node [style=none] (6) at (0.75, 0) {};
		\node [style=none] (7) at (-0.25, 0) {};
		\node [style=box] (8) at (-0.5, 0) {$\;\;Ff\;\;$};
		\node [style=none] (9) at (-0.75, 0) {};
		\node [style=none] (10) at (1, -0.75) {};
		\node [style=none] (11) at (-0.5, -0.75) {};
	\end{pgfonlayer}
	\begin{pgfonlayer}{edgelayer}
		\draw (4.center) to (0.center);
		\draw (10.center) to (5);
		\draw (2) to (6.center);
		\draw (11.center) to (8);
		\draw (7.center) to (1.center);
		\draw (3) to (9.center);
	\end{pgfonlayer}
      \end{tikzpicture}}}        
    \; = \;
    \xi \left( 
 \left[
    \begin{aligned}\begin{tikzpicture}[scale=0.5]
	\begin{pgfonlayer}{nodelayer}
	  \node [box] (f) at (0,0) {$\;\;f\;\;$};
	\end{pgfonlayer}
	\begin{pgfonlayer}{edgelayer}
	  \draw (-.4,0) to (-.4,1);
	  \draw (.4,0) to (.4,1);
	  \draw (0,0) to (0,-1);
	\end{pgfonlayer}
    \end{tikzpicture}\end{aligned}\right]
    \right) \tensor \xi \left(
 \left[
    \begin{aligned}\begin{tikzpicture}[scale=0.5]
	\begin{pgfonlayer}{nodelayer}
	  \node [box] (g) at (0,0) {$\;\;g\;\;$};
	\end{pgfonlayer}
	\begin{pgfonlayer}{edgelayer}
	  \draw (-.4,0) to (-.4,1);
	  \draw (.4,0) to (.4,1);
	  \draw (0,0) to (0,-1);
	\end{pgfonlayer}
    \end{tikzpicture}\end{aligned}\right] 
     \right)
 \end{align*}
  because $F$ is a strict monoidal functor and by Definition~\ref{def:environment}(a). It also clearly preserves identities. Thus $\xi$ is an isomorphism of monoidal categories.
\end{proof}


Next we consider necessary conditions for a category to be of the form $\CP(\cat{C})$.
If $\cat{C}$ is a dagger braided monoidal category, there is a canonical strict monoidal functor $F \colon \cat{C} \to \CP(\cat{C})$, given by $A \mapsto A$ on objects and by $f \mapsto [\lambda^\dag f]$ on morphisms. 

\begin{theorem}\label{thm:environmentnecessary}
  Let $\cat{C}$ be a dagger braided monoidal category. 
  Then $\cat{\widehat{C}}=\CP(\cat{C})$ with the canonical functor $F \colon \cat{C} \to \cat{\widehat{C}}$ and 
  $\top_A = [\rho_A^\dag] \in \cat{\widehat{C}}(A,I)$
  is an environment structure. 
\end{theorem}
\begin{proof}
  Definition~\ref{def:environment}(a) and (c) are automatically satisfied, using Lemma~\ref{lem:ancilla} and the fact that $\lambda_I=\rho_I$. Let $f \in \cat{C}(A,X \otimes B)$ and $g \in \cat{C}(A,Y \otimes B)$.
  Unfolding definitions, we see that
  \[
    \vcenter{\hbox{\begin{tikzpicture}[scale=0.75]
	\begin{pgfonlayer}{nodelayer}
		\node [style=none] (0) at (0.25, 1) {};
		\ground{1}{-0.25, 0.75};
		\node [style=none] (2) at (0.25, 0) {};
		\node [style=box] (3) at (0, 0) {$\;\;Ff\;\;$};
		\node [style=none] (4) at (-0.25, 0) {};
		\node [style=none] (5) at (0, -0.75) {};
	\end{pgfonlayer}
	\begin{pgfonlayer}{edgelayer}
		\draw (5.center) to (3);
		\draw (2.center) to (0.center);
		\draw (1) to (4.center);
	\end{pgfonlayer}
      \end{tikzpicture}}} \;=\;
      \vcenter{\hbox{\begin{tikzpicture}[scale=0.75]
	\begin{pgfonlayer}{nodelayer}
		\node [style=none] (0) at (0.25, 1) {};
		\ground{1}{-0.25, 0.75};
		\node [style=none] (2) at (0.25, 0) {};
		\node [style=box] (3) at (0, 0) {$\;\;Fg\vphantom{f}\;\;$};
		\node [style=none] (4) at (-0.25, 0) {};
		\node [style=none] (5) at (0, -0.75) {};
	\end{pgfonlayer}
	\begin{pgfonlayer}{edgelayer}
		\draw (5.center) to (3);
		\draw (2.center) to (0.center);
		\draw (1) to (4.center);
	\end{pgfonlayer}
      \end{tikzpicture}}}
  \]
  in $\widehat{\cat{C}}$ comes down to 
  $f \sim g$ in $\cat{C}$.
  Hence Definition~\ref{def:environment}(b) is satisfied.
\end{proof}


Finally, we study when environment structures are faithful. For faithful environment structures, we may replace the functor $F$ by its image $\cat{D}$, which is a monoidal subcategory of $\CP(\cat{C})$. This is how Definition~\ref{def:environment} relates to its namesake that was previously defined for the case when $\cat{C}$ is compact~\cite{coeckeperdrix:channels}.\footnote{\todo{The environment structures of~\cite{coeckeheunen:qpl} are precisely the faithful ones of Definition~\ref{def:environment}; Definition~\ref{def:doubling} below also differs from~\cite{coeckeheunen:qpl} to match this.}} 
\todo{But if $\cat{C}$ is not compact, it is not clear whether $\cat{D}$ has a dagger, and thus whether Theorem~\ref{thm:environmentnecessary} would still hold.}
To make the comparison, for the rest of this article we will assume that $\cat{C}$ is compact.
%
We may think of $\cat{D}$ as a \emph{double} of $\cat{C}$. The next definition makes this precise.

\begin{definition}\label{def:doubling}
  A dagger monoidal category satisfies the \emph{doubling axiom} when 
  \[
    \vcenter{\hbox{\begin{tikzpicture}[scale=0.75]
	\begin{pgfonlayer}{nodelayer}
		\node [style=none] (0) at (0.6, 0.75) {};
		\node [style=box] (1) at (0.6, 0) {$\;\;f^\dag\;\;$};
		\node [style=none] (2) at (0.6, -0.75) {};
		\node [style=none] (3) at (-0.6, 0.75) {};
		\node [style=box] (4) at (-0.6, 0) {$\;\;f\vphantom{^\dag}\;\;$};
		\node [style=none] (5) at (-0.6, -0.75) {};
	\end{pgfonlayer}
	\begin{pgfonlayer}{edgelayer}
		\draw (0.center) to (1);
		\draw (2.center) to (1);
		\draw (3.center) to (4);
		\draw (5.center) to (4);
	\end{pgfonlayer}
      \end{tikzpicture}}}
    \;=\;
    \vcenter{\hbox{\begin{tikzpicture}[scale=0.75]
	\begin{pgfonlayer}{nodelayer}
		\node [style=none] (0) at (0.6, 0.75) {};
		\node [style=box] (1) at (0.6, 0) {$\;\;g^\dag\vphantom{f}\;\;$};
		\node [style=none] (2) at (0.6, -0.75) {};
		\node [style=none] (3) at (-0.6, 0.75) {};
		\node [style=box] (4) at (-0.6, 0) {$\;\;g\vphantom{f^\dag}\;\;$};
		\node [style=none] (5) at (-0.6, -0.75) {};
	\end{pgfonlayer}
	\begin{pgfonlayer}{edgelayer}
		\draw (0.center) to (1);
		\draw (2.center) to (1);
		\draw (3.center) to (4);
		\draw (5.center) to (4);
	\end{pgfonlayer}
      \end{tikzpicture}}}
    \vcenter{\hbox{}}
   \quad\Longleftrightarrow\quad
    \vcenter{\hbox{\begin{tikzpicture}[scale=0.75]
	\begin{pgfonlayer}{nodelayer}
		\node [style=none] (0) at (0, 0.75) {};
		\node [style=box] (1) at (0, 0) {$\;\;f\;\;$};
		\node [style=none] (2) at (0, -0.75) {};
	\end{pgfonlayer}
	\begin{pgfonlayer}{edgelayer}
		\draw (0.center) to (1);
		\draw (2.center) to (1);
	\end{pgfonlayer}
      \end{tikzpicture}}}
   \;=\;
    \vcenter{\hbox{\begin{tikzpicture}[scale=0.75]
	\begin{pgfonlayer}{nodelayer}
		\node [style=none] (0) at (0, 0.75) {};
		\node [style=box] (1) at (0, 0) {$\;\;g\vphantom{f}\;\;$};
		\node [style=none] (2) at (0, -0.75) {};
	\end{pgfonlayer}
	\begin{pgfonlayer}{edgelayer}
		\draw (0.center) to (1);
		\draw (2.center) to (1);
	\end{pgfonlayer}
      \end{tikzpicture}}}
 \]
  for all parallel morphisms $f$ and $g$.
\end{definition}


\begin{proposition}\label{prop:faithfuldoubling}
  If $\cat{C}$ is a dagger compact category, then the canonical functor $F \colon \cat{C} \to \CP(\cat{C})$ is faithful if and only if $\cat{C}$ satisfies the doubling axiom.
\end{proposition}
\begin{proof}
  Let $f,g \in \cat{C}(A,B)$, and suppose that $F(f)=F(g)$, that is, $\lambda^\dag f \sim \lambda^\dag g$. By Corollary~\ref{cor:CPwithouteqrel} this is equivalent to,
  \[
    \begin{aligned}\begin{tikzpicture}[scale=0.75]
	\begin{pgfonlayer}{nodelayer}
		\node [style=none] (0) at (1, 1.2) {};
		\node [style=none] (1) at (0, 1.2) {};
		\node [style=none] (2) at (1, 0.5) {};
		\node [style=none] (3) at (0, 0.5) {};
		\node [style=box] (4) at (0, 0.5) {$\;\;f^\dag\;\;$};
		\node [style=none] (5) at (-0.25, 0.5) {};
		\node [style=none] (6) at (1, -0.5) {};
		\node [style=none] (7) at (0, -0.5) {};
		\node [style=box] (8) at (0, -0.5) {$\;\;f\phantom{^\dag}\;\;$};
		\node [style=none] (9) at (-0.25, -0.5) {};
		\node [style=none] (10) at (1, -1.2) {};
		\node [style=none] (11) at (0, -1.2) {};
	\end{pgfonlayer}
	\begin{pgfonlayer}{edgelayer}
		\draw [in=90, out=270, looseness=1.25] (3.center) to (6.south);
		\draw (10) to (6);
		\draw (4) to (1.center);
		\draw [in=90, out=-90, looseness=1.25] (2.north) to (7.center);
		\draw (2) to (0);
		\draw (8) to (11.center);
	\end{pgfonlayer}\end{tikzpicture}\end{aligned}
	=
    \begin{aligned}\begin{tikzpicture}[scale=0.75]
	\begin{pgfonlayer}{nodelayer}
		\node [style=none] (0) at (1, 1.2) {};
		\node [style=none] (1) at (0, 1.2) {};
		\node [style=none] (2) at (1, 0.5) {};
		\node [style=none] (3) at (0, 0.5) {};
		\node [style=box] (4) at (0, 0.5) {$\;\;g^\dag\;\;$};
		\node [style=none] (5) at (-0.25, 0.5) {};
		\node [style=none] (6) at (1, -0.5) {};
		\node [style=none] (7) at (0, -0.5) {};
		\node [style=box] (8) at (0, -0.5) {$\;\;g\phantom{^\dag}\;\;$};
		\node [style=none] (9) at (-0.25, -0.5) {};
		\node [style=none] (10) at (1, -1.2) {};
		\node [style=none] (11) at (0, -1.2) {};
	\end{pgfonlayer}
	\begin{pgfonlayer}{edgelayer}
		\draw [in=90, out=270, looseness=1.25] (3.center) to (6.south);
		\draw (10) to (6);
		\draw (4) to (1.center);
		\draw [in=90, out=-90, looseness=1.25] (2.north) to (7.center);
		\draw (2) to (0);
		\draw (8) to (11.center);
	\end{pgfonlayer}\end{tikzpicture}\end{aligned}
  \]
  Naturality of the braiding shows this to be equivalent to $f \otimes f^\dag = g \otimes g^\dag$.
  This implies $f=g$ precisely when $\cat{C}$ satisfies the doubling axiom.
\end{proof}

Recall that a dagger compact category $\cat{C}$ satisfies the
so-called \emph{preparation-state agreement
axiom}~\cite{coecke:projective} when $f f^\dag = g g^\dag$ implies $f=g$ for all morphisms $f$ and $g$ with domain $I$:
\[
    \vcenter{\hbox{\begin{tikzpicture}[scale=0.75]
	\begin{pgfonlayer}{nodelayer}
		\node [style=none] (0) at (0.8, 1.25) {};
		\node [style=box] (1a) at (0.8, 0.5) {$\;\;f\phantom{^\dag}\;\;$};
		\node [style=box] (1) at (0.8, -0.5) {$\;\;f^\dag\;\;$};
		\node [style=none] (2) at (0.8, -1.25) {};
	\end{pgfonlayer}
	\begin{pgfonlayer}{edgelayer}
		\draw (0.center) to (1a);
		\draw (2.center) to (1);
	\end{pgfonlayer}
      \end{tikzpicture}}}
    \;=\;
    \vcenter{\hbox{\begin{tikzpicture}[scale=0.75]
	\begin{pgfonlayer}{nodelayer}
		\node [style=none] (0) at (0.8, 1.25) {};
		\node [style=box] (1a) at (0.8, 0.5) {$\;\;\vphantom{f^\dag}g\phantom{^\dag}\;\;$};
		\node [style=box] (1) at (0.8, -0.5) {$\;\;\vphantom{f^\dag}g^\dag\;\;$};
		\node [style=none] (2) at (0.8, -1.25) {};
	\end{pgfonlayer}
	\begin{pgfonlayer}{edgelayer}
		\draw (0.center) to (1a);
		\draw (2.center) to (1);
	\end{pgfonlayer}
      \end{tikzpicture}}}
  \quad\Longleftrightarrow\quad
    \vcenter{\hbox{\begin{tikzpicture}[scale=0.75]
	\begin{pgfonlayer}{nodelayer}
		\node [style=none] (0) at (0.8, 1.25) {};
		\node [style=box] (1a) at (0.8, 0.5) {$\;\;f\vphantom{^\dag}\;\;$};
	\end{pgfonlayer}
	\begin{pgfonlayer}{edgelayer}
		\draw (0.center) to (1a);
	\end{pgfonlayer}
      \end{tikzpicture}}}
    \;=\;
    \vcenter{\hbox{\begin{tikzpicture}[scale=0.75]
	\begin{pgfonlayer}{nodelayer}
		\node [style=none] (0) at (0.8, 1.25) {};
		\node [style=box] (1a) at (0.8, 0.5) {$\;\;\vphantom{f^\dag}g\;\;$};
	\end{pgfonlayer}
	\begin{pgfonlayer}{edgelayer}
		\draw (0.center) to (1a);
	\end{pgfonlayer}
      \end{tikzpicture}}}
\]
So if $\cat{C}$ is a dagger compact category, then $\CPM(\cat{C})$
satisfies the preparation-state agreement axiom when
\[
    \vcenter{\hbox{\begin{tikzpicture}[scale=0.75]
	\begin{pgfonlayer}{nodelayer}
		\node [style=none] (0) at (-0.75, 1) {};
		\node [style=none] (1) at (0.75, 1) {};
		\node [style=box] (2) at (-0.75, 0) {$\;\;f_*\vphantom{^\dag}\;\;$};
		\node [style=box] (3) at (0.75, 0) {$\;\;f\phantom{{}_*^\dag}\;\;$};
		\node [style=none] (0a) at (-0.75, -3) {};
		\node [style=none] (1a) at (0.75, -3) {};
		\node [style=box] (2a) at (-0.75, -2) {$\;\;f_*^\dag\;\;$};
		\node [style=box] (3a) at (0.75, -2) {$\;\;f^\dag\;\;$};
	\end{pgfonlayer}
	\begin{pgfonlayer}{edgelayer}
		\draw (0) to (2);
		\draw [bend left=90, looseness=1.25] (3) to (2);
		\draw (1) to (3);
		\draw (0a) to (2a);
		\draw [bend right=90, looseness=1.25] (3a) to (2a);
		\draw (1a) to (3a);
	\end{pgfonlayer}
      \end{tikzpicture}}}
  \;=\;
    \vcenter{\hbox{\begin{tikzpicture}[scale=0.75]
	\begin{pgfonlayer}{nodelayer}
		\node [style=none] (0) at (-0.75, 1) {};
		\node [style=none] (1) at (0.75, 1) {};
		\node [style=box] (2) at (-0.75, 0) {$\;\;g_*\vphantom{^\dag}\;\;$};
		\node [style=box] (3) at (0.75, 0) {$\;\;g\phantom{{}_*^\dag}\;\;$};
		\node [style=none] (0a) at (-0.75, -3) {};
		\node [style=none] (1a) at (0.75, -3) {};
		\node [style=box] (2a) at (-0.75, -2) {$\;\;g_*^\dag\;\;$};
		\node [style=box] (3a) at (0.75, -2) {$\;\;g^\dag\;\;$};
	\end{pgfonlayer}
	\begin{pgfonlayer}{edgelayer}
		\draw (0) to (2);
		\draw [bend left=90, looseness=1.25] (3) to (2);
		\draw (1) to (3);
		\draw (0a) to (2a);
		\draw [bend right=90, looseness=1.25] (3a) to (2a);
		\draw (1a) to (3a);
	\end{pgfonlayer}
      \end{tikzpicture}}}
 \quad\Longleftrightarrow\quad
    \vcenter{\hbox{\begin{tikzpicture}[scale=0.75]
	\begin{pgfonlayer}{nodelayer}
		\node [style=none] (0) at (-0.75, 1) {};
		\node [style=none] (1) at (0.75, 1) {};
		\node [style=box] (2) at (-0.75, 0) {$\;\;f_*\phantom{^\dag}\;\;$};
		\node [style=box] (3) at (0.75, 0) {$\;\;f\phantom{_*^\dag}\;\;$};
	\end{pgfonlayer}
	\begin{pgfonlayer}{edgelayer}
		\draw (0) to (2);
		\draw [bend left=90, looseness=1.25] (3) to (2);
		\draw (1) to (3);
	\end{pgfonlayer}
      \end{tikzpicture}}}
   \;=\;
    \vcenter{\hbox{\begin{tikzpicture}[scale=0.75]
	\begin{pgfonlayer}{nodelayer}
		\node [style=none] (0) at (-0.75, 1) {};
		\node [style=none] (1) at (0.75, 1) {};
		\node [style=box] (2) at (-0.75, 0) {$\;\;g_*\phantom{^\dag}\;\;$};
		\node [style=box] (3) at (0.75, 0) {$\;\;g\phantom{_*^\dag}\;\;$};
	\end{pgfonlayer}
	\begin{pgfonlayer}{edgelayer}
		\draw (0) to (2);
		\draw [bend left=90, looseness=1.25] (3) to (2);
		\draw (1) to (3);
	\end{pgfonlayer}
      \end{tikzpicture}}}
\]
for all morphisms $f$ and $g$ with a common domain. 
The preparation-state agreement axiom (for dagger compact $\cat{C}$) is equivalent to the
doubling axiom (for $\cat{C})$\todo{~\cite[Section~3]{coecke:mix}}.
\todo{However, the doubling axiom has an advantage over the preparation-state agreement axiom in that Proposition~\ref{prop:faithfuldoubling} can work when $\cat{C}$ is not compact, too, with the condition of Corollary~\ref{cor:CPwithouteqrel}. Moreover}, the doubling axiom is stronger than the 
preparation-state axiom in the sense of the following proposition. 

\begin{proposition}
 If a dagger compact category $\cat{C}$ satisfies the doubling axiom,
 then $\CPM(\cat{C})$ satisfies the preparation-state agreement axiom.
\end{proposition}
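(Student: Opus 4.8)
The plan is to verify the displayed reformulation of the axiom directly, invoking the doubling hypothesis only at the very last step. Fix Kraus morphisms $f$ and $g$ with codomain $A$, and let $F, G \colon I \to A$ be the $\CPM$-states they determine. The reverse implication of the axiom is automatic, so I only need the forward direction: that $F \after F^\dag = G \after G^\dag$ forces $F = G$.

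First I would translate both sides into $\cat{C}$. The state $F$ is represented by $(f_* \tensor f) \after \eta \colon I \to A^* \tensor A$, which is precisely the name $\name{\rho}$ of the endomorphism $\rho := f \after f^\dag \colon A \to A$; the point to retain is that $\rho$ is \emph{self-adjoint}, since $\rho^\dag = (f \after f^\dag)^\dag = \rho$. Because morphisms of $\CPM(\cat{C})$ are honest morphisms of $\cat{C}$ composed as such, and because the object through which $F \after F^\dag$ is composed is the tensor unit, the $\CPM$-composite $F \after F^\dag$ is just the $\cat{C}$-composite $\name{\rho} \after \name{\rho}^\dag \colon A^* \tensor A \to A^* \tensor A$ (up to the symmetry isomorphism, which self-adjointness of $\rho$ renders harmless). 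Writing $\sigma := g \after g^\dag$, the hypothesis becomes $\name{\rho} \after \name{\rho}^\dag = \name{\sigma} \after \name{\sigma}^\dag$ and the desired conclusion becomes $\name{\rho} = \name{\sigma}$, equivalently $\rho = \sigma$.

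The heart of the argument is to rewire $\name{\rho} \after \name{\rho}^\dag$ into the tensor square $\rho \tensor \rho$. Bending the two $A^*$-legs of $\name{\rho} \after \name{\rho}^\dag$ into $A$-legs with the cups and caps of $\cat{C}$, and using the snake equations together with $\rho^\dag = \rho$ to turn the lower copy $\name{\rho}^\dag$ into a second straight copy of $\rho$, yields exactly $\rho \tensor \rho \colon A \tensor A \to A \tensor A$. This manipulation is a fixed isomorphism of hom-sets, independent of the chosen Kraus morphism, so performing it on both sides of the hypothesis transports it to $\rho \tensor \rho = \sigma \tensor \sigma$.

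Finally, $\rho$ and $\sigma$ are parallel endomorphisms of $A$, so the doubling axiom applies verbatim and gives $\rho = \sigma$; hence $\name{\rho} = \name{\sigma}$, that is $F = G$. I expect the rewiring of the third paragraph to be the main obstacle: one must check carefully that $\name{\rho} \after \name{\rho}^\dag$ genuinely bends to $\rho \tensor \rho$, and in particular that self-adjointness of $\rho$ is exactly the property making the transposed lower copy reproduce $\rho$ rather than $\rho^\dag$ or a transpose of it. Everything else is routine bookkeeping together with the single appeal to the doubling axiom.
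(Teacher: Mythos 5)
Your overall strategy is exactly the paper's: translate the hypothesis $F \after F^\dag = G \after G^\dag$ along a wire-bending bijection of hom-sets into a doubled equation between parallel endomorphisms of $A$, apply the doubling axiom of $\cat{C}$ to that pair, and bend back (equality of names) to get $F = G$. The rewiring you single out as the main obstacle is indeed correct: bending the $A^*$-legs of $\name{\rho} \after \name{\rho}^\dag$ produces $\rho \tensor \rho^\dag$, and self-adjointness of $\rho$ is precisely what turns the lower copy into a second copy of $\rho$ rather than its adjoint; this is what the paper's ``elementary graphical manipulations'' amount to.

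There is, however, one genuine gap: you have silently trivialized the ancillary system. A state $I \to A$ of $\CPM(\cat{C})$ has Kraus morphism $f \colon I \to C \tensor A$ for an \emph{arbitrary} object $C$, whereas your ``Kraus morphisms with codomain $A$'', your representation ``$(f_* \tensor f) \after \eta$'', and above all your definition $\rho := f \after f^\dag \colon A \to A$ only typecheck when $C = I$; the correct representation $(\id \tensor \eta_C^\dag \tensor \id)(f_* \tensor f)$ contracts the ancilla \emph{after} $f_* \tensor f$. In $\Cat{fdHilb}$ the trivial-ancilla states are exactly the pure states $\name{vv^\dag}$, so as written your argument verifies the preparation-state agreement axiom only on a proper subclass of states, while the axiom quantifies over all of them. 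The repair is small and recovers the paper's proof verbatim: define $\rho$ by plugging only the $C$-output of $f$ into the $C$-input of $f^\dag$, leaving both $A$-wires free --- abstractly $\rho = (f^\dag \tensor \id[A])(\id[C] \tensor \swapmor_{A,A})(f \tensor \id[A]) \colon A \to A$, which is the partial trace $\mathrm{Tr}_C(f f^\dag)$ in $\Cat{fdHilb}$. This $\rho$ is still self-adjoint, one still has $F = \name{\rho}$, your bending still yields $\rho \tensor \rho$, and, importantly, $\rho$ and its counterpart for $g$ remain \emph{parallel} endomorphisms of $A$ even when $f$ and $g$ have different ancillary systems (the paper's phrase ``common domain'' allows exactly this), so the doubling axiom still applies. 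With that substitution your proof coincides with the paper's.
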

\begin{proof}
  Let $f$ and $g$ be morphisms with a common domain. Elementary
  graphical manipulations yield:
  \[
    \vcenter{\hbox{\begin{tikzpicture}[scale=0.75]
	\begin{pgfonlayer}{nodelayer}
		\node [style=none] (0) at (-0.75, 1) {};
		\node [style=none] (1) at (0.75, 1) {};
		\node [style=box] (2) at (-0.75, 0) {$\;\;f_*\vphantom{^\dag}\;\;$};
		\node [style=box] (3) at (0.75, 0) {$\;\;f\phantom{{}_*^\dag}\;\;$};
		\node [style=none] (0a) at (-0.75, -3) {};
		\node [style=none] (1a) at (0.75, -3) {};
		\node [style=box] (2a) at (-0.75, -2) {$\;\;f_*^\dag\;\;$};
		\node [style=box] (3a) at (0.75, -2) {$\;\;f^\dag\;\;$};
	\end{pgfonlayer}
	\begin{pgfonlayer}{edgelayer}
		\draw (0) to (2);
		\draw [bend left=90, looseness=1.25] (3) to (2);
		\draw (1) to (3);
		\draw (0a) to (2a);
		\draw [bend right=90, looseness=1.25] (3a) to (2a);
		\draw (1a) to (3a);
	\end{pgfonlayer}
      \end{tikzpicture}}}
  \;=\;
    \vcenter{\hbox{\begin{tikzpicture}[scale=0.75]
	\begin{pgfonlayer}{nodelayer}
		\node [style=none] (0) at (-0.75, 1) {};
		\node [style=none] (1) at (0.75, 1) {};
		\node [style=box] (2) at (-0.75, 0) {$\;\;g_*\vphantom{^\dag}\;\;$};
		\node [style=box] (3) at (0.75, 0) {$\;\;g\phantom{{}_*^\dag}\;\;$};
		\node [style=none] (0a) at (-0.75, -3) {};
		\node [style=none] (1a) at (0.75, -3) {};
		\node [style=box] (2a) at (-0.75, -2) {$\;\;g_*^\dag\;\;$};
		\node [style=box] (3a) at (0.75, -2) {$\;\;g^\dag\;\;$};
	\end{pgfonlayer}
	\begin{pgfonlayer}{edgelayer}
		\draw (0) to (2);
		\draw [bend left=90, looseness=1.25] (3) to (2);
		\draw (1) to (3);
		\draw (0a) to (2a);
		\draw [bend right=90, looseness=1.25] (3a) to (2a);
		\draw (1a) to (3a);
	\end{pgfonlayer}
      \end{tikzpicture}}}
 \quad\Longleftrightarrow\quad
    \vcenter{\hbox{\begin{tikzpicture}[scale=0.75]
	\begin{pgfonlayer}{nodelayer}
		\node [style=none] (0) at (-0.75, 1) {};
		\node [style=none] (1) at (0.75, 1) {};
		\node [style=box] (2) at (-0.75, 0) {$\;\;f\phantom{^\dag}\;\;$};
		\node [style=box] (3) at (0.75, 0) {$\;\;f\phantom{^\dag}\;\;$};
		\node [style=none] (0a) at (-0.75, -2) {};
		\node [style=none] (1a) at (0.75, -2) {};
		\node [style=box] (2a) at (-0.75, -1) {$\;\;f^\dag\;\;$};
		\node [style=box] (3a) at (0.75, -1) {$\;\;f^\dag\;\;$};
	\end{pgfonlayer}
	\begin{pgfonlayer}{edgelayer}
		\draw (0) to (2);
                \draw (2a) to (2);
                \draw (3a) to (3);
		\draw (1) to (3);
		\draw (0a) to (2a);
		\draw (1a) to (3a);
	\end{pgfonlayer}
      \end{tikzpicture}}}
  \;=\;
    \vcenter{\hbox{\begin{tikzpicture}[scale=0.75]
	\begin{pgfonlayer}{nodelayer}
		\node [style=none] (0) at (-0.75, 1) {};
		\node [style=none] (1) at (0.75, 1) {};
		\node [style=box] (2) at (-0.75, 0) {$\;\;g\phantom{^\dag}\;\;$};
		\node [style=box] (3) at (0.75, 0) {$\;\;g\phantom{^\dag}\;\;$};
		\node [style=none] (0a) at (-0.75, -2) {};
		\node [style=none] (1a) at (0.75, -2) {};
		\node [style=box] (2a) at (-0.75, -1) {$\;\;g^\dag\;\;$};
		\node [style=box] (3a) at (0.75, -1) {$\;\;g^\dag\;\;$};
	\end{pgfonlayer}
	\begin{pgfonlayer}{edgelayer}
		\draw (0) to (2);
                \draw (2a) to (2);
                \draw (3a) to (3);
		\draw (1) to (3);
		\draw (0a) to (2a);
		\draw (1a) to (3a);
	\end{pgfonlayer}
      \end{tikzpicture}}}
  \]
  By the doubling axiom for $\cat{C}$, the latter is equivalent to
  $f f^\dag = g g^\dag$. Finally, by using graphical manipulation
  again, we obtain:
 \[
   \vcenter{\hbox{\begin{tikzpicture}[scale=0.75]
	\begin{pgfonlayer}{nodelayer}
		\node [style=none] (1) at (0.75, 1) {};
		\node [style=box] (3) at (0.75, 0) {$\;\;f\phantom{^\dag}\;\;$};
		\node [style=none] (1a) at (0.75, -2) {};
		\node [style=box] (3a) at (0.75, -1) {$\;\;f^\dag\;\;$};
	\end{pgfonlayer}
	\begin{pgfonlayer}{edgelayer}
               \draw (3a) to (3);
		\draw (1) to (3);
		\draw (1a) to (3a);
	\end{pgfonlayer}
      \end{tikzpicture}}}
  \;=\;
    \vcenter{\hbox{\begin{tikzpicture}[scale=0.75]
	\begin{pgfonlayer}{nodelayer}
		\node [style=none] (1) at (0.75, 1) {};
		\node [style=box] (3) at (0.75, 0) {$\;\;g\phantom{^\dag}\;\;$};
		\node [style=none] (1a) at (0.75, -2) {};
		\node [style=box] (3a) at (0.75, -1) {$\;\;g^\dag\;\;$};
	\end{pgfonlayer}
	\begin{pgfonlayer}{edgelayer}
               \draw (3a) to (3);
		\draw (1) to (3);
		\draw (1a) to (3a);
	\end{pgfonlayer}
      \end{tikzpicture}}}
 \quad\Longleftrightarrow\quad
    \vcenter{\hbox{\begin{tikzpicture}[scale=0.75]
	\begin{pgfonlayer}{nodelayer}
		\node [style=none] (0) at (-0.75, 1) {};
		\node [style=none] (1) at (0.75, 1) {};
		\node [style=box] (2) at (-0.75, 0) {$\;\;f_*\phantom{^\dag}\;\;$};
		\node [style=box] (3) at (0.75, 0) {$\;\;f\phantom{_*^\dag}\;\;$};
	\end{pgfonlayer}
	\begin{pgfonlayer}{edgelayer}
		\draw (0) to (2);
		\draw [bend left=90, looseness=1.25] (3) to (2);
		\draw (1) to (3);
	\end{pgfonlayer}
      \end{tikzpicture}}}
   \;=\;
    \vcenter{\hbox{\begin{tikzpicture}[scale=0.75]
	\begin{pgfonlayer}{nodelayer}
		\node [style=none] (0) at (-0.75, 1) {};
		\node [style=none] (1) at (0.75, 1) {};
		\node [style=box] (2) at (-0.75, 0) {$\;\;g_*\phantom{^\dag}\;\;$};
		\node [style=box] (3) at (0.75, 0) {$\;\;g\phantom{_*^\dag}\;\;$};
	\end{pgfonlayer}
	\begin{pgfonlayer}{edgelayer}
		\draw (0) to (2);
		\draw [bend left=90, looseness=1.25] (3) to (2);
		\draw (1) to (3);
	\end{pgfonlayer}
      \end{tikzpicture}}}
 \]
  Hence $\CPM(\cat{C})$ satisfies the state-preparation agreement axiom.
\end{proof}

\bibliographystyle{plain}
\bibliography{cp}

\end{document}